\documentclass[11pt,twoside]{article}
\usepackage{fancyhdr,color,graphicx,amsmath,amsfonts,amssymb,latexsym,bm,indentfirst,cite,amsthm,enumerate}
\usepackage[colorlinks=true,backref=page]{hyperref}
\usepackage[all]{xy}
\usepackage[a4paper,left=25mm,right=25mm,top=35mm,body={155mm,230mm}]{geometry}
\usepackage{pdfpages}
\usepackage{titletoc}
\usepackage{fancyhdr}
\newtheorem*{claim}{\hspace{2em} Claim}

\newtheorem{Theorem}{Theorem}[section]%按章排序号
\newtheorem{Definition}[Theorem]{ Definition}
\newtheorem{Lemma}[Theorem]{ Lemma}

\newtheorem*{theoremm}{Main Theorem}
\newtheorem*{Remark}{Remark}

\makeatletter
\newcommand{\subsectionruninhead}{\@startsection{subsection}{2}{0mm}{-\baselineskip}{-0mm}{\bf\large}}
\newcommand{\subsubsectionruninhead}{\@startsection{subsubsection}{3}{0mm}{-\baselineskip}{-0mm}{\bf\normalsize}}
\makeatother

\makeatletter
\newsavebox{\@brx}
\newcommand{\llangle}[1][]{\savebox{\@brx}{\(\m@th{#1\langle}\)}
	\mathopen{\copy\@brx\kern-0.5\wd\@brx\usebox{\@brx}}}
\newcommand{\rrangle}[1][]{\savebox{\@brx}{\(\m@th{#1\rangle}\)}
	\mathclose{\copy\@brx\kern-0.5\wd\@brx\usebox{\@brx}}}
\makeatother

\begin{document}
\newpage

\title{On the Density of Periodic Measures for Star Vector Fields}

\author{Qimai Sun, Guangwa Wang, Wanlou Wu \footnote{Wanlou Wu is the corresponding author. Wanlou Wu was supported by NSFC 12001245.}}
	
\date{}

\maketitle

\begin{abstract}
   In this paper, we prove that every ergodic hyperbolic invariant measure of a $C^1$ star vector field can be approximated by periodic measures in weak$^*$ topology. This extends a classical result of Katok \cite{Ka} for $C^{1+\alpha}(\alpha>0)$ diffeomorphisms to $C^1$ star vector field of any dimension. 
\end{abstract}

\section{Introduction}
   Dynamics aims to describe the long-term evolution of systems governed by known differential rules. In deterministic nonlinear dynamical systems, a core puzzle is how simple deterministic equations can generate complex trajectories that appear random and exhibit extreme sensitivity to initial conditions--a phenomenon known as chaos. Understanding chaos is not about declaring prediction impossible, but about seeking new forms of order within the apparent disorder. Shifting the perspective from individual trajectories to overall statistical behavior is an effective method for revealing such hidden order. In this endeavor, periodic measures serve as a crucial bridge, connecting the classical concept of periodic orbits with statistical descriptions, and providing a key tool for understanding the complex asymptotic behavior of general dynamical systems--as famously illustrated by the \textquotedblleft period three implies chaos\textquotedblright result \cite{LY75}.
 
   In the ergodic theory of dynamical systems, one way to study a measure is to approximate it by well-understood measures. This is particularly the case in smooth ergodic theory, which is the ergodic theory of differentiable dynamical systems. A prime example of such well-understood measures is the class of periodic measures, which combine geometric information of periodic orbits with statistical and ergodic measure theory. In 1970, Bowen \cite{B70} constructed invariant measures on basic sets of an Axiom A diffeomorphism. For an Axiom A diffeomorphism, Sigmund \cite{Sig70} proved that periodic measures are dense in the set of all invariant measures. Katok \cite{Ka} showed that hyperbolic periodic points are dense in the closure of the basin of a given hyperbolic measure for $C^{1+\alpha}(\alpha>0)$ diffeomorphisms. Ma\~{n}\'{e} \cite{MR} used his Ergodic Closing Lemma to prove that every ergodic measure of a generic diffeomorphism can be approximated in the weak topology by measures supported on periodic orbits. Hirayama \cite{HM03} extended Sigmund's result to nonuniformly hyperbolic dynamical systems. The proof relies on the shadowing lemma for such systems, provided in \cite{Ka}, and on a nonuniform version of the specification property. Abdenur, Bonatti and Crovivier \cite{ABC11} presented a variant of Ma\~{n}\'{e}'s classical result: for $C^1$-generic diffeomorphisms, every invariant measure is the weak limit of convex sums of Dirac measures along periodic orbits. For further approximation properties between ergodic measures and periodic measures of diffeomorphisms, see \cite{LLS09,WW10,LLS14}. For semiflows on Hilbert spaces, Lian and Young \cite{Lian} showed that hyperbolic periodic points are dense in the closure of the basin of a given hyperbolic measure. Ma \cite{M22} extended Lian and Young's result to smooth semiflows on separable Banach spaces. Recently, Li, Liang and Liu \cite{LLL24} proved that for $C^{1+\alpha}$($\alpha>0$) nonuniformly hyperbolic vector fields, every ergodic hyperbolic invariant measure not supported on a singularity can be approximated by periodic measures in weak$^*$ topology. Lu and Wu \cite{LW2025} extended this result to $C^1$ star vector fields on three-dimensional manifolds.
   
   In this paper, we discuss this problem for $C^1$ star vector fields on high-dimensional manifolds. More precisely, we investigate ergodic hyperbolic invariant measures for such systems. We prove that for these vector fields,  every ergodic hyperbolic invariant measure can be approximated by periodic measures in weak$^*$ topology. The main result of the present paper is the following.        
\begin{theoremm}\label{MThm}
   Let $\varphi_t$ be the $C^1$ flow generated by a star vector field $X\in\mathfrak{X}^1(M)$ on a $d$-dimensional compact Riemannian manifold $M$, and let $\mu$ be an ergodic hyperbolic invariant measure that is not supported on a singularity. Then there exists a sequence of periodic measures converging to $\mu$ in the weak$^*$ topology.
\end{theoremm}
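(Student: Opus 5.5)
The plan is to combine three ingredients: the dominated splitting that star flows carry on the linear Poincaré flow, Pesin-type estimates for ergodic hyperbolic measures, and a Liao-type shadowing lemma for quasi-hyperbolic orbit arcs of $C^1$ flows. This follows the scheme of \cite{LLL24} and \cite{LW2025}. In Liao's theory the $C^{1+\alpha}$ regularity used by Katok is replaced by domination.

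\textbf{Step 1: set up the measure and the splitting.} Since $\mu$ is ergodic and not supported on a singularity, $\mu(\mathrm{Sing}(X))=0$. We therefore work with the linear Poincaré flow $\psi_t$ on the normal bundle $N$ over regular points. Hyperbolicity of $\mu$ means that the Lyapunov exponents of $\psi_t$ are nonzero. Let $s$ be the number of negative exponents, so that $N=E\oplus F$ with $\dim E=s$.

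For star vector fields we use the known results of Liao, Gan--Wen and Shi--Gan--Wen. These give an $s$-index dominated splitting $N=\Delta^s\oplus\Delta^u$ of $\psi_t$ over the closure of the regular points of $\mathrm{supp}\,\mu$. One should also check that it extends suitably over singularities, via the extended linear Poincaré flow. This splitting coincides $\mu$-a.e. with the Oseledets splitting $E\oplus F$.

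\textbf{Step 2: uniform hyperbolic times.} Choose $T>0$ and $\lambda>0$ such that
\[
\int \frac1T\log\|\psi_T|_{\Delta^s(x)}\|\,d\mu<-\lambda,
\qquad
\int \frac1T\log m(\psi_T|_{\Delta^u(x)})\,d\mu>\lambda .
\]
This is possible by the subadditive ergodic theorem.

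Fix a compact set $\Lambda$ with $\mu(\Lambda)>1-\epsilon$ whose points are $\mu$-generic and are uniformly bounded away from singularities. By the Pliss lemma applied to the time-$T$ discretization, $\mu$-typical points in $\Lambda$ have positive density of times $n$ at which $x$ is $(\lambda/2,T)$-quasi hyperbolic along the orbit arc $\varphi_{[0,nT]}(x)$. We also require that, at these times, $\varphi_{nT}(x)$ returns into $\Lambda$ close to $x$; Poincaré recurrence together with a Birkhoff-generic choice of $x$ and $\Lambda$ gives this. Moreover, the empirical measure $\frac1{nT}\int_0^{nT}\delta_{\varphi_t(x)}dt$ is within $\epsilon$ of $\mu$ in a fixed metric inducing the weak$^*$ topology.

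\textbf{Step 3: shadowing and conclusion.} Apply Liao's shadowing lemma for quasi-hyperbolic orbit arcs, in the form given by Gan. The key requirement is that the endpoints $x$ and $\varphi_{nT}(x)$ lie in $\Lambda$, so they are at uniformly positive distance from $\mathrm{Sing}(X)$. The lemma then yields a periodic orbit $\varphi_t(p)$ with period $\tau$ close to $nT$ that $\delta\,|X(\varphi_t(x))|$-shadows the arc up to time reparametrization.

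Because the shadowing is scaled by the flow speed and the reparametrization has small derivative distortion, the periodic measure $\frac1\tau\int_0^\tau\delta_{\varphi_t(p)}dt$ is close to the empirical measure. This holds for any continuous test function, using uniform continuity. Letting $\epsilon\to0$ gives the theorem.

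\textbf{Main obstacle.} The hard step is Step 1 in dimension $d>3$. We must ensure that the domination holds on the whole orbit closure, which may accumulate on singularities, and not merely on $\mathrm{supp}\,\mu$ away from them. The shadowing lemma needs quasi-hyperbolicity estimates with respect to a dominated splitting defined along the entire arc, and the arc may pass arbitrarily close to singularities.

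To handle this, I would first show that any singularity in the closure is hyperbolic and Lorenz-like with the right index compatible with $s$, using the star condition and \cite{LW2025}-style arguments generalized via Shi--Gan--Wen. I would then apply domination of the extended linear Poincaré flow on the blow-up. This gives the uniform angle and contraction estimates that Liao's lemma requires.
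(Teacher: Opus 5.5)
The architecture is the paper's: star-flow domination identified with the Oseledets splitting, recurrent points with uniform estimates, Liao's scaled shadowing lemma, and comparison with an empirical measure. There is a genuine gap, though: the step that produces a quasi-hyperbolic arc fails as written, for two reasons.

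\textbf{Unscaled versus scaled estimates.} You build hyperbolic times for the unscaled flow $\psi_T$. The $\varepsilon\lvert X(\varphi_t(x))\rvert$-shadowing lemma you invoke (Theorem~\ref{Shadow}) requires $(\eta,T)$-$\psi^*_t$-quasi hyperbolicity, and
\[
\prod_{i=0}^{n-1}\bigl\lVert\psi^*_T|_{E(\varphi_{iT}(x))}\bigr\rVert=\frac{\lvert X(x)\rvert}{\lvert X(\varphi_{nT}(x))\rvert}\prod_{i=0}^{n-1}\bigl\lVert\psi_T|_{E(\varphi_{iT}(x))}\bigr\rVert .
\]
So $\psi$-good times are not $\psi^*$-good times once the arc passes near $\mathrm{Sing}(X)$. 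This, not domination, is where singularities enter. The paper handles it by working with $\psi^*_t$ throughout. The cost is nil: $\psi_t$ and $\psi^*_t$ have the same exponents for $\mu$, and domination for one is domination for the other.

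\textbf{Good times need not be return times.} Quasi-hyperbolicity with constant $1$ forces two things: $x$ itself must contract along $E$ at every intermediate time, and $\varphi_{nT}(x)$ must expand backward along $F$ all the way to time $0$. A typical point of your $\Lambda$ need not satisfy the first condition. Pliss's lemma gives good times of density $c>0$, and Poincar\'e recurrence gives returns to a small ball $B(x,\delta)$, but those returns have small density. Nothing forces a good time to be a return time. The paper closes this with Pesin blocks $\Lambda^{T_0}_{\eta_0}(C)$ of measure close to $1$. It then absorbs $C$ by passing to the step $T=j_0T_0$ with $C<e^{j_0T_0\epsilon/4}$. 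Every block point then has the estimates with constant $1$, so \emph{any} return of a block point to a small ball inside the block gives a quasi-hyperbolic arc.

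\textbf{The ``main obstacle'' does not arise.} The orbit closure of a $\mu$-generic point is $\mathrm{supp}(\mu)$, and the arc lies in the invariant set $\mathrm{supp}(\mu)\setminus\mathrm{Sing}(X)$. There, \cite{LSWW20} provides a dominated splitting for $\psi^*_t$ of index $\mathrm{Ind}(\mu)$ with uniform constants; the paper's Claim identifies it with $E^s\oplus E^u$. That is all Theorem~\ref{Shadow} asks for, besides endpoints at distance $>\alpha$ from $\mathrm{Sing}(X)$. Passages near singularities are absorbed by Liao's scaled charts. Nothing has to be extended over singularities. Proving that every singularity in $\mathrm{supp}(\mu)$ is Lorenz-like with index compatible with $s$ is unnecessary. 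It is also far harder than the argument needs, since higher-dimensional star flows can have singularities of different indices in one class (cf.\ \cite{SFA20,BA21}).

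A minor point: \cite{Gan2002} is a result for diffeomorphisms; the flow version is \cite{LST85} (see also \cite{HW18}).
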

   Compared to the diffeomorphism case, singularities in vector fields introduce significant difficulties. Flows with singularities exhibit rich and complex dynamics, as exemplified by the \emph{Lorenz attractor} \cite{Lor63,Guc76}. At singularities, one cannot define the \textbf{linear Poincar\'e flow} (see Definition \ref{Def:linearPoincare}). Consequently, some compactness properties are lost, which obstructs the direct application of certain techniques developed for diffeomorphisms, such as Crovisier's central model \cite{CS10} and the distortion arguments of Pujals-Sambarino \cite{PS00}, to singular vector fields. Even in the absence of singularities, the standard Pesin theory used by Lian and Young \cite{Lian} is not directly applicable here because our vector field is only $C^1$. Our paper is organized as follows. Section \ref{P} covers the necessary background on vector ﬁelds. Section \ref{LP} details the Lyapunov metric and the shadowing lemma. Section \ref{MA} is devoted to proving \textbf{Main Theorem}.

\section{Preliminaries}\label{P}
\subsection{Basic Contents of Vector Fields}\label{BCVF} 	
   Let $M$ be a compact $d$-dimensional $(d\geq3)$ $C^\infty$ Riemannian manifold without boundary. Denote by $\mathfrak{X}^r(M)(r\geq 1)$ the space of all $C^r$ vector fields on $M$. Given $X\in\mathfrak{X}^1(M)$, a point $x\in M$ is called a \emph{singularity} of $X$ if $X(x)=0$. Denote by $\text{Sing}(X)$ the set of all singularities. A point $x\in M$ is \emph{regular} if $X(x)\neq 0$. Let $\varphi^X_t=\{\varphi_t\}_{t\in\mathbb{R}}$ be the $C^1$ flow generated by a vector field $X$. We will simply denote it by $\varphi_t$ when no confusion arises. A regular point $p$ is \emph{periodic} if $\varphi_{t_0}(p)=p$ for some $t_0>0$. A critical point is either a singularity or a periodic point. Denote the \emph{normal bundle} of $X$ by $$\mathcal{N}\triangleq\bigcup_{x\in M\setminus\text{Sing}(X)}\mathcal{N}_x,$$where $\mathcal{N}_{x}$ is the orthogonal complement space of the flow direction $X(x)$, i.e., $$\mathcal{N}_x=\left\{v\in T_{x}M: v\perp X(x)\right\}.$$ For the flow $\varphi_t$ generated by $X$, its derivative with respect to the space variable is called the \textbf{tangent flow} and is denoted by $\Phi_t={\rm d}\varphi_t$. The linear Poincar\'{e} flow is a mathematical tool used in the study of dynamical systems, particularly for analyzing the behavior of flows near trajectories or invariant sets. It is derived from the differential (or tangent map) of the flow and describes the evolution of vectors in the tangent bundle after projecting out the component along the flow direction.    
\begin{Definition}\label{Def:linearPoincare}
   For $x\in M\setminus\text{Sing}(X),v\in\mathcal{N}_x$ and $t\in\mathbb{R}$, the \textbf{linear Poincar\'e flow} $$\psi_t:\mathcal{N}\to\mathcal{N}$$ is defined by $$\psi_t(v)\triangleq\Phi_t(v)-\frac{\langle\Phi_t(v),X(\varphi_t(x)) \rangle}{\left\lVert X(\varphi_t(x))\right\rVert^2} X(\varphi_t(x)).$$ Namely, $\psi_t(v)$ is the orthogonal projection of $\Phi_t(v)$ on $\mathcal{N}_{\varphi_t(x)}$ along the flow direction $X(\varphi_t(x))$.  
\end{Definition}  
     
   Fix $T>0$, the norm $$\lVert\psi_T\rVert=\sup\left\{\lvert\psi_T(v)\rvert:~v\in\mathcal{N},~\lvert v\rvert=1\right\}$$ is uniformly upper bounded on $\mathcal{N}$, although $M\setminus{\rm Sing}(X)$ may be not compact. Denote by $$m(\psi_T)=\inf\left\{\lvert\psi_T(v)\rvert:~v\in\mathcal{N},~\lvert v\rvert=1\right\}$$ the mininorm of $\psi_T$. Since $$m(\psi_T)=\lVert\psi_T^{-1}\rVert^{-1}=\lVert\psi_{-T}\rVert^{-1},$$ the mininorm $m(\psi_T)$ is uniformly bounded away from $0$ on $\mathcal{N}$. A closely related construction is the \textbf{scaled linear Poincar\'e flow} $\psi_t^*:\mathcal{N}\to\mathcal{N}$, which is the linear Poincar\'{e} flow scaled by the flow speed.     
\begin{Definition}
   The \textbf{scaled linear Poincar\'e flow} $\psi_t^*:\mathcal{N}\to\mathcal{N}$ is defined by $$\psi^*_t(v)\triangleq\frac{\left\lVert X(x)\right\rVert}{\left\lVert X(\varphi_t(x))\right\rVert}\psi_t(v)=\frac{\psi_t(v)}{\left\lVert\Phi_t|_{\langle X(x)\rangle}\right\rVert},$$ where $x\in M\setminus\text{Sing}(X),~v\in\mathcal{N}_x$ and $\langle X(x)\rangle$ denotes the $1$-dimensional subspace of $T_xM$ spanned by the flow direction $X(x)$.	
\end{Definition}   
   
   The linear Poincar\'e flow $\psi_t$ fails to be compact due to the existence of singularities. To overcome this difficulty, the linear Poincar\'e flow can also be defined in a more general way by Liao \cite{Liao89}. Li, Gan and Wen \cite{LGW05} used the terminology of \textquotedblleft extended linear Poincar\'e flow\textquotedblright. For every point $x\in M$, the \textbf{unit sphere fiber} at $x$ is defined as $$S_xM=\left\{v:~v\in T_xM,~\left\lvert v\right\rvert=1\right\}.$$ The \textbf{sphere bundle} $$SM=\bigcup_{x\in M}S_xM$$ is then compact. For every $v\in SM$, one defines the \textbf{unit tangent flow} $\Phi_t^I: SM\to SM$ as $$\Phi_t^I(v)=\dfrac{\Phi_t(v)}{\left\lvert\Phi_t(v)\right\rvert}.$$ Given a compact invariant set $\Lambda$ of the flow $\varphi_t$, let $$\widetilde{\Lambda}=\text{Closure}\left(\bigcup_{x\in \Lambda\backslash\text{Sing}(X)}\dfrac{X(x)}{\left\lvert X(x)\right\rvert}\right)$$ in $SM$. Thus the essential difference between $\widetilde{\Lambda}$ and $\Lambda$ is on the singularities. We can get more information on $\widetilde{\Lambda}$: it tells us how regular points in $\Lambda$ accumulate singularities. For each $x\in M$, and any two orthogonal vectors $v_1\in S_xM$, $v_2\in T_xM$, define $$\Theta_t(v_1,v_2)=\left(\Phi_t(v_1),\Phi_t(v_2)-\frac{\langle\Phi_t(v_1),\Phi_t(v_2)\rangle}{\left\lVert\Phi_t(v_1)\right\rVert^2} \Phi_t(v_1)\right).$$ By the definition, the two components of $\Theta_t$ remain orthogonal. Denoting $$\Theta_t=\left(\text{Proj}_1(\Theta_t),\text{Proj}_2(\Theta_t)\right),$$ we have, for each regular point $x\in M$ and every vector $v\in \mathcal{N}_x$, $$\psi_t(v)=\text{Proj}_2(\Theta_t)(X(x),v).$$ By the continuity of $\Theta_t$, one can extend the definition of $\psi_t$ to  include singularities. For every vector $u\in\widetilde{\Lambda}$, define the fiber $$\widetilde{\mathcal{N}}_u=\{v\in T_{\pi(u)}M:~v\bot u\},$$ which yields a $(d-1)$-dimensional vector bundle $\widetilde{\mathcal{N}}$ over the base space $\widetilde{\Lambda}$. For every $u\in\widetilde{\Lambda}$, and every $v\in\widetilde{\mathcal{N}}_u$, define the linear map $$\widetilde{\psi}_t(v)=\text{Proj}_2(\Theta_t)(u,v).$$ Then, the original linear Poincar\'{e} flow $\psi_t$ can be \textquotedblleft embedded\textquotedblright in the flow $\Theta_t$. By definition, $\text{Proj}_2(\Theta_t)$ induces a continuous linear flow $\widetilde{\psi}_t$ on the bundle $\widetilde{\mathcal{N}}$. Consequently, $\widetilde{\psi}_t(v)$ varies continuously with respect to the vector field $X$, the time $t$ and the vector $v$ and can be viewed as a compactification of $\psi_t$.
   
   Now, we recall the definitions of several types of splittings for the normal bundle of vector fields.
\begin{Definition}\label{DDS}
   Let $\Lambda$ be a compact invariant set of vector field $X$, and let $\mathcal{N}_\Lambda=E\oplus F$ be an invariant splitting with respect to the linear Poincar\'{e} flow $\psi_t$ over $\Lambda$. The splitting $\mathcal{N}_\Lambda=E\oplus F$ is called a \textbf{dominated splitting} with respect to $\psi_t$, if there exist constants $C\geq 1$ and $\lambda>0$ such that for every $x\in\Lambda$ and $t\geq 0$, $$\left\lVert\psi_t|_{E(x)}\right\rVert\cdot \left\lVert\psi_{-t}|_{F(\varphi_t(x))}\right\rVert\leq Ce^{-\lambda t}.$$ For simplicity, we denote this by $E\prec F$.
\end{Definition} 

\begin{Definition}\label{FHS}
   Let $\Lambda$ be an invariant set of vector field $X$, and let $\mathcal{N}_\Lambda=E\oplus F$ be an invariant splitting with respect to the linear Poincar\'{e} flow $\psi_t$ over $\Lambda$. The splitting $\mathcal{N}_\Lambda=E\oplus F$ is called a \textbf{hyperbolic splitting} with respect to $\psi_t$, if there exist constants $C\geq 1$ and $\lambda>0$ such that $$\left\lVert\psi_t|_{E(x)}\right\rVert\leq Ce^{-\lambda t}\text{ and }\quad\left\lVert\psi_{-t}|_{F(x)}\right\rVert\leq Ce^{-\lambda t},\text{ for every $x\in\Lambda$ and every $t\geq 0$}.$$ In this case, the set $\Lambda$ itself is said to be hyperbolic with respect to $\psi_t$. 
\end{Definition}

\begin{Remark}
   Recall that the scaled linear Poincar\'{e} flow $\psi^*_t$ is induced by the linear Poincar\'{e} flow $\psi_t$. Correspondingly, one can define dominated and hyperbolic splittings with respect to $\psi^*_t$. Specifically, let $\Lambda$ be an {\rm(}not necessarily compact{\rm)} invariant set of flow $\varphi_t$. An invariant splitting $$\mathcal{N}_\Lambda=E\oplus F$$ over the invariant set $\Lambda$ is called a \textbf{dominated splitting} with respect to the scaled linear Poincar\'{e} flow $\psi^*_t$, if there exist constants $C\geq 1$ and $\lambda>0$ such that for every $x\in\Lambda$ and every $t\geq 0$, $$\left\lVert\psi^*_t|_{E(x)}\right\rVert\cdot\left\lVert\psi^*_{-t}|_{F(\varphi_t(x))}\right\rVert\leq Ce^{-\lambda t}.$$ An invariant splitting $$\mathcal{N}_\Lambda=E\oplus F$$ over the invariant set $\Lambda$ is called a \textbf{hyperbolic splitting} with respect to the scaled linear Poincar\'{e} flow $\psi^*_t$, if there exist constants $C\geq 1$ and $\lambda>0$ such that for every $x\in\Lambda$ and every $t\geq 0$, $$\left\lVert\psi^*_t|_{E(x)}\right\rVert\leq Ce^{-\lambda t}\text{ and }\left\lVert\psi^*_{-t}|_{F(x)}\right\rVert\leq Ce^{-\lambda t}.$$ Furthermore, a splitting $$\mathcal{N}_\Lambda=E\oplus F$$ is a \textbf{dominated {\rm (}resp. hyperbolic{\rm)} splitting} with respect to $\psi_t$ \textbf{if and only if} it is a \textbf{dominated {\rm (}resp. hyperbolic{\rm)} splitting} with respect to $\psi^*_t$.     
\end{Remark}

   The notion of star system originated in the study of the famous stability conjecture. Recall that classical theorems of Smale \cite{SS70} (for diffeomorphisms) and Pugh-Shub \cite{PS70} (for flows) state that Axiom A plus the no-cycle condition implies the $\Omega$-stability. Palis and Smale \cite{PaS70} conjectured that the converse also holds, which has become known as the $\Omega$-stability conjecture. During the investigation of this conjecture, Pliss, Liao, and Ma\~{n}\'{e} identified an important condition, termed (by Liao) the star condition. Subsequently, many mathematicians turned their attention to the star system.    
\begin{Definition}\label{SVF}
   A vector field $X\in\mathfrak{X}^1(M)$ is called a \textbf{star vector field}, if there exists a $C^1$ neighborhood $\mathcal{U}$ of $X$ such that for every vector field $Y\in\mathcal{U}$, all singularities and all periodic orbits of $\varphi^Y_t$ are hyperbolic.
\end{Definition} 
   The set of all star vector fields on $M$ is denoted by $\mathfrak{X}^*(M)$, endowed with $C^1$-topology. Indeed, the $\Omega$-stability implies the star condition easily (Franks \cite{FJ71} and Liao \cite{LST79}). Thus, whether the star condition can give back Axiom A plus the no-cycle condition became a striking problem, raised independently by Liao \cite{LST81} and Ma\~{n}\'{e} \cite{MR}. For diffeomorphisms, it is known that a diffeomorphism is star if and only if it is hyperbolic, i.e., satisfies Axiom A plus the no-cycle condition \cite{AN92,HS92}. Gan and Wen \cite{GW06} proved that nonsingular star vector fields satisfy Axiom A plus the no-cycle condition. However, a singular star vector field may fail to satisfy Axiom A, as illustrated by the famous Lorenz attractor \cite{Guc76}. To describe the geometric structure of the Lorenz attractor, Morales, Pacifico and Pujals \cite{MPP04} developed a notion called singular hyperbolicity (see \cite{LGW05} and \cite{MM08} for higher dimensions). Shi, Gan and Wen \cite{SGW14} later showed that a generic star vector field is singular hyperbolic under certain assumptions on its singularities. For a long time, it was believed that singular hyperbolicity was the appropriate notion to characterize generic star vector fields, analogous to hyperbolicity to star diffeomorphisms. Recently, however, da Luz \cite{SFA20} constructed a remarkable example of a $5$-dimensional star vector field that has two singularities with different indices robustly contained in a single chain class; consequently, it is robustly nonsingular hyperbolic. In response to this example, Bonatti and da Luz \cite{BA21} developed a new notion called multisingular hyperbolicity. They proved that every $X$ in an open dense subset of $\mathfrak{X}^*(M)$ is multisingular hyperbolic. Conversely, every multisingular hyperbolic vector field is a star vector field.
   
\subsection{The Lyapunov Exponents of Vector Fields}\label{LEF} 
   In this section, we introduce some ergodic theory for vector fields. Given a vector field $X\in\mathfrak{X}^r(r\geq1)$, a measure $\mu$ is called \textbf{invariant} with respect to the flow $\varphi_t$ (or the vector field $X$), if $\mu$ is an invariant measure of $\varphi_T$, for every $T\in\mathbb{R}$. Similarly, a measure $\mu$ is called \textbf{ergodic} with respect to the flow $\varphi_t$ (or the vector field $X$), if $\mu$ is an ergodic invariant measure of $\varphi_T$, for every $T\in\mathbb{R}$. The sets of all invariant measures and all ergodic measures of the flow $\varphi_t$ are denoted by $\mathcal{M}(X)$, $\mathcal{E}(X)$ respectively. Let $\mu$ be an invariant measure of flow $\varphi_t$. By the Oseledec Multiplicative Ergodic Theorem \cite{OV}, for $\mu$-almost every $x\in M$, there exist a positive integer $k(x)\in[1,d]$, real numbers $\chi_1(x)<\chi_2(x)<\cdots<\chi_{k(x)}(x)$, and a measurable $\Phi_t$-invariant splitting $$T_xM=E_1(x)\oplus E_2(x)\oplus\cdots\oplus E_k(x)$$ such that $$\lim_{t\to\pm\infty}\dfrac{1}{t}\log
   \left\lVert\Phi_t(v)\right\rVert=\chi_i(x),\text{ for all }v\in E_i(x)\setminus\{\mathbf{0}\},\text{ and }i=1,2,\cdots,k(x).$$ The numbers $\chi_1(x),\cdots,\chi_{k(x)}(x)$ are called the \textbf{Lyapunov exponents} at point $x$ of $\Phi_t$ with respect to $\mu$ and the vector formed by these numbers (counted with multiplicity, endowed with the increasing order) is called the \textbf{Lyapunov vector} at point $x$ of $\Phi_t$ with respect to $\mu$. The \textbf{index} of $\mu$, denoted by $\text{Ind}(\mu)$, is defined as $$\text{Ind}(\mu)\triangleq\sum_{\chi_i(x)<0}\text{dim}E_i(x).$$ If $\mu$ is ergodic, then the numbers $k(x),\chi_1(x),\cdots,\chi_{k(x)}(x)$ are constant $\mu$-almost everywhere; we denote them simply by $k,\chi_1,\cdots,\chi_k$. By the Poincar\'{e} Recurrence Theorem, for $\mu$-almost every $x\in M$, $$\lim_{t\to\pm\infty}\dfrac{1}{t}\log\left\lVert\Phi_t|_{\langle X(x)\rangle}\right\rVert=0,$$ where $\langle X(x)\rangle$ is the $1$-dimensional subspace of $T_xM$ spanned by the flow direction $X(x)$. Hence, a zero Lyapunov exponent is always present in the Lyapunov spectrum of the flow $\Phi_t$, associated with the flow direction itself.
  
   For an ergodic invariant measure $\mu$ which is not supported on $\text{Sing}(X)$, the Oseledec Multiplicative Ergodic Theorem applied to the linear Poincar\'e flow $\psi_t:\mathcal{N}\to\mathcal{N}$ implies that for $\mu$-almost every $x\in M\backslash{\rm Sing}(X)$, there exist an integer $k\in[1,d-1]$, real numbers $\chi_1<\chi_2<\cdots<\chi_k$, and a measurable $\psi_t$-invariant splitting $$\mathcal{N}=E_1\oplus E_2\oplus\cdots\oplus E_k$$ of the normal bundle such that $$\lim_{t\to\pm\infty}\dfrac{1}{t}\log\left\lVert\psi_t(v)\right\rVert=\chi_i,~\text{ for all }v\in E_i\setminus \{\mathbf{0}\},\text{ and }i=1,2,\cdots,k.$$ Accordingly, the numbers $\chi_1,\cdots,\chi_k$ are the \textbf {Lyapunov exponents} of $\psi_t$ with respect to $\mu$ and the vector formed by these numbers (counted with multiplicity, endowed with the increasing order) is called the \textbf{Lyapunov vector} of $\psi_t$ with respect to $\mu$. Recall that the scaled linear Poincar\'{e} flow $\psi_t^*:\mathcal{N}\to\mathcal{N}$ is defined by  $$\psi^*_t(v)=\frac{\left\lVert X(x)\right\rVert}{\left\lVert X(\varphi_t(x))\right\rVert}\psi_t(v)=\frac{\psi_t(v)}{\left\lVert\Phi_t|_{\langle X(x)\rangle}\right\rVert},\text{ for any non-zero } v\in\mathcal{N}.$$ Since the Lyapunov exponent of $\Phi_t$ along the flow direction is zero, i.e., $$\lim_{t\to\pm\infty}\dfrac{1}{t}\log\left\lVert\Phi_t|_{\langle X(x)\rangle}\right\rVert=0,$$ it follows that for any nonzero $v \in \mathcal{N}_x$, $$\lim_{t\to\pm\infty}\dfrac{1}{t}\log\left\lVert\psi^*_t(v)\right\rVert=\lim_{t\to\pm\infty}\dfrac{1}{t}\left(\log\left\lVert\psi_t(v)\right\rVert-\log\left\lVert\Phi_t|_{\langle X(x)\rangle}\right\rVert\right)=\lim_{t\to\pm\infty}\dfrac{1}{t}\log\left\lVert\psi_t(v)\right\rVert.$$ Consequently, for $\mu$-almost every $x$, the scaled linear Poincar\'{e} flow $\psi^*_t$ admits exactly the same Oseledets splitting $$\mathcal{N}_x=E_1(x)\oplus E_2(x)\oplus\cdots\oplus E_k(x)$$
   with the same Lyapunov exponents $\chi_1,\cdots,\chi_k$. Hence, the Lyapunov exponents (and the Lyapunov vector) of the scaled linear Poincar\'{e} flow $\psi^*_t$ with respect to $\mu$ coincide with those of the linear Poincar\'{e} flow $\psi_t$. Ignoring multiplicity, we often denote the Lyapunov exponents of the (scaled) linear Poincar\'{e} flow with respect to $\mu$ by $$\lambda_1\leq\lambda_2\leq\cdots\leq\lambda_{d-1}.$$ It means that $$\lambda_j=\chi_i,\quad\text{ for each } d_1+d_2+\cdots+d_{i-1}<j\leq d_1+d_2+\cdots+d_i,$$ where $d_i = \dim E_i(x)$.
    
\begin{Definition}\label{Def:hyperbolicmeasure}
   An ergodic measure $\mu$ of the flow $\varphi_t$ is called \textbf{regular} if it is not supported on a singularity. A regular ergodic measure is called \textbf{hyperbolic} if all Lyapunov exponents of the linear Poincar\'{e} flow $\psi_t$ with respect to $\mu$ are nonzero.
\end{Definition}
     
\begin{Remark}
   One could also define hyperbolicity of an ergodic measure using the tangent flow $\Phi_t={\rm d}\varphi_t$ as usual. However, for every regular ergodic measure, the tangent flow $\Phi_t$ always has a zero Lyapunov exponent in the flow direction.
\end{Remark}

   Given an ergodic hyperbolic invariant measure $\mu$, let $\Gamma$ be the set of $\mu$-generic points that are Lyapunov regular for the linear Poincar\'{e} flow $\psi_t$ (i.e., the full $\mu$-measure set provided by the Oseledec Multiplicative Ergodic Theorem). For every $x\in\Gamma$, let $$\mathcal{N}_x=E_1(x)\oplus E_2(x)\oplus\cdots\oplus E_s(x)\oplus E_{s+1}(x)\oplus\cdots\oplus E_k(x)$$ be the Oseledec decomposition corresponding to the distinct Lyapunov exponents $$\chi_1(\mu)<\chi_2(\mu)<\cdots<\chi_s(\mu)<0<\chi_{s+1}(\mu)<\cdots<\chi_k(\mu),$$ where $k\in[0,d-1]$ is the number of distinct exponents, and let $d_i=\dim E_i(x) \ge 1$ be their multiplicities ($i=1,\cdots,k$). Define the stable and unstable subbundles by $$E^s=E_1\oplus E_2\oplus\cdots\oplus E_s,\qquad E^u=E_{s+1}\oplus E_{s+2}\oplus\cdots\oplus E_k.$$ Then, the Oseledec splitting can be written as $$\mathcal{N}=E^s\oplus E^u,$$ and we call $E^s$ and $E^u$ stable bundle and unstable bundle, respectively.
  
\section{Lyapunov Norms and Shadowing Lemma for Vector Fields}\label{LP}
\subsection{Lyapunov Norms and Lyapunov Charts}
   In this section, we consider only $C^1$ vector fields $X\in\mathfrak{X}^1(M)$. Because the linear Poincaré flow (Definition \ref{Def:linearPoincare}) is not defined at singularities, certain uniform estimates are lost. However, by working with the scaled linear Poincar\'{e} flow, we can recover some uniformity. 
   
   On the tangent space $TM$, the \textbf{Sasaki metric} (see \cite[ Subsection 2.1]{BMW12}), denoted by $d_{S}$, is a natural metric induced by the Riemannian metric of $M$. By the compactness of $M$, there exists a small constant $\rho_S>0$ such that for any two points $x,~y\in M$ with $d(x,y)<\rho_S$, there exists a unique minimizing geodesic joining $y$ to $x$. Moreover, if $u_x\in T_xM$ and $u_y\in T_yM$ satisfy $d_{S}(u_x,u_y)\leq\rho_S$, then letting $u'_y\in T_xM$ be the parallel transport of $u_y$ along this geodesic from $y$ to $x$, one has $$\dfrac{\left(d(x,y)+\lvert u_x-u'_y\rvert\right)}{2}\leq d_{S}(u_x,u_y)\leq 2\left(d(x,y)+\lvert u_x-u'_y\rvert\right).$$ Furthermore, there exists a constant $K_{G^1}$ such that for any two (not necessarily based at the same point) unit vectors $u_1,~u_2\in TM$, $$d_{G^1}(\langle u_1\rangle,\langle u_2\rangle)\leq K_{G^1}\cdot d_{S}(u_1,u_2),$$ where $d_{G^1}$ denotes the Grassmann distance on the bundle $G^1$ of $1$-dimensional subspaces of $TM$, i.e., $$G^1=\left\{N\subset T_xM: N \text{ is a $1$-dimensional linear subspace of $T_xM$}, x\in M\right\}.$$ Since $M$ is compact, there exists a constant $K_0>0$ such that $$\max_{x\in M}\left\{\lvert X(x)\rvert,~\lVert DX(x)\rVert\right\}\leq K_0.$$ Li, Liang, and Liu \cite{LLL24} formulated the precise relationship between the Sasaki metric and the base Riemannian metric as the following Lemma \ref{Dlgx}.         
\begin{Lemma}\label{Dlgx}{\rm\cite[Lemma 3.1]{LLL24}}
   There exist constants $K_1>K_0$ and $\beta^*_0>0$ such that for every $x\in M\backslash\text{Sing}(X)$ and every $y\in B(x,\beta^*_0\lvert X(x)\rvert)$,
\begin{itemize}
   \item[{\rm (1)}] $1-K_1\cdot \dfrac{d(x,y)}{\lvert X(x)\rvert}\leq\dfrac{\lvert X(x)\rvert}{\lvert X(y)\rvert}\leq1+K_1\cdot \dfrac{d(x,y)}{\lvert X(x)\rvert}${\rm;}
	
   \item [{\rm (2)}] $d_S\left(\dfrac{X(x)}{\lvert X(x)\rvert},\dfrac{X(y)}{\lvert X(y)\rvert}\right)\leq K_1\cdot\dfrac{d(x,y)}{\lvert X(x)\rvert}${\rm;}
	
   \item [{\rm (3)}] $d_{G^1}\left(\langle X(x)\rangle,\langle X(y)\rangle\right)\leq K_1\cdot\dfrac{d(x,y)}{\lvert X(x)\rvert}${\rm.} 
\end{itemize}   	
\end{Lemma}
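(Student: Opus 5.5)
The idea is to compare $X(y)$ with the parallel transport of $X(x)$ along the minimizing geodesic from $x$ to $y$. This requires $d(x,y)<\rho_S$, which holds once $\beta^*_0 K_0\le\rho_S$, since $\lvert X(x)\rvert\le K_0$. Write $X'(y)\in T_xM$ for the parallel transport of $X(y)$ to $x$. The only analytic input is the mean value estimate
\[
\lvert X'(y)-X(x)\rvert\le C_0\, d(x,y),
\]
where $C_0$ depends only on $K_0$ and the geometry of $M$. It follows by integrating the covariant derivative of $X$ along the geodesic, using $\lVert DX\rVert\le K_0$; the curvature and parallel-transport correction terms are uniformly bounded by compactness. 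Set $r=d(x,y)/\lvert X(x)\rvert<\beta^*_0$.

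\emph{Item (1).} Parallel transport is an isometry, so $\bigl\lvert\lvert X(y)\rvert-\lvert X(x)\rvert\bigr\rvert\le C_0 d(x,y)=C_0 r\lvert X(x)\rvert$. Hence
\[
\frac{\lvert X(y)\rvert}{\lvert X(x)\rvert}\in[1-C_0r,\;1+C_0r].
\]
Take $\beta^*_0\le 1/(2C_0)$ and invert. Using $1/(1+a)\ge 1-a$ and $1/(1-a)\le 1+2a$ for $a\le 1/2$, we get
\[
1-2C_0 r\le \frac{\lvert X(x)\rvert}{\lvert X(y)\rvert}\le 1+2C_0 r.
\]
So any $K_1\ge 2C_0$ works. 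In particular $X(y)\neq0$, so the unit vectors in (2) and (3) are defined.

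\emph{Item (2).} Write $u=X(x)/\lvert X(x)\rvert$. Then
\[
\left\lvert \frac{X'(y)}{\lvert X(y)\rvert}-u\right\rvert\le \frac{\lvert X'(y)-X(x)\rvert}{\lvert X(y)\rvert}+\lvert X(x)\rvert\left\lvert\frac{1}{\lvert X(y)\rvert}-\frac{1}{\lvert X(x)\rvert}\right\rvert .
\]
Each term is at most a constant times $r$, using the lower bound $\lvert X(y)\rvert\ge \lvert X(x)\rvert/2$ from (1). Now apply the Sasaki comparison:
\[
d_S\le 2\bigl(d(x,y)+\lvert u-u'_y\rvert\bigr).
\]
To bound the term $d(x,y)$ by a multiple of $r$, note $d(x,y)=r\lvert X(x)\rvert\le K_0 r$. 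This shows $d_S\le C_1 r$. To apply the comparison we also need $d_S\le\rho_S$ a priori. This follows from the same estimate with the first inequality of the Sasaki comparison, or simply by shrinking $\beta^*_0$ so that the left-hand quantity is small.

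\emph{Item (3).} This is immediate from (2) and the stated inequality $d_{G^1}\le K_{G^1} d_S$. Finally choose $K_1=\max\{2C_0,\;C_1,\;K_{G^1}C_1,\;K_0+1\}$ and $\beta^*_0$ small enough for all the constraints above.

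The main obstacle is the mean value estimate $\lvert X'(y)-X(x)\rvert\le C_0 d(x,y)$ on a manifold with a merely $C^1$ field. I would handle it in a uniform finite atlas of normal coordinate charts. There, parallel transport differs from the coordinate identification by $O(d(x,y))$, and $X$ is Lipschitz with constant controlled by $K_0$ and the chart $C^1$-norms.
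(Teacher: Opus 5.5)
The paper does not prove this lemma; it quotes it from \cite[Lemma 3.1]{LLL24}. So the only question is whether your argument stands on its own, and it does. It follows the standard route: a Lipschitz bound for $X$ after parallel transport, then renormalization, then the Sasaki comparison, then $d_{G^1}\le K_{G^1}d_S$. The constants close up. With $C_0r<1/2$ you get (1) for any $K_1\ge 2C_0$, then $\lvert u-u'_y\rvert\le 4C_0r$, then $d_S\le 2(K_0+4C_0)r$, and (3) follows.

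\emph{The mean value estimate.} It needs no curvature or chart corrections. Let $\gamma$ be the unit-speed minimizing geodesic from $x$ to $y$, and let $P_s$ be parallel transport from $\gamma(s)$ back to $x$. Then $\frac{d}{ds}P_sX(\gamma(s))=P_s\nabla_{\gamma'(s)}X$. This gives $\lvert X'(y)-X(x)\rvert\le d(x,y)\sup_M\lVert\nabla X\rVert$ exactly, and the supremum is finite because $X$ is $C^1$ and $M$ is compact.

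\emph{The a priori bound $d_S\le\rho_S$.} Your first justification is circular: the lower Sasaki inequality carries the same hypothesis. The clean fix is to get the upper bound directly from the definition of the Sasaki metric. Take the straight segment from $u$ to $u'_y$ inside $T_xM$, followed by the parallel transport of $u'_y$ along $\gamma$. This is a curve in $TM$ ending at $u_y=X(y)/\lvert X(y)\rvert$, with Sasaki length $\lvert u-u'_y\rvert+d(x,y)$. Hence $d_S(u,u_y)\le d(x,y)+\lvert u-u'_y\rvert$ holds with no smallness hypothesis at all. Your second option, shrinking $\beta^*_0$, also works, but justifying it requires exactly this kind of estimate.
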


   Let $r_0>0$ be the radius such that the exponential map $$\exp:~TM\to M$$ is a $C^{\infty}$ diffeomorphism when restricted to the disk bundle $T_xM(r_0)=\left\{v\in T_xM: |v|<r_0\right\}$. By reducing $\beta^*_0>0$ if necessary, we may assume that $$10\beta^*_0K_1<\min\{r_0,1\}.$$ Thus, part $(1)$ of Lemma \ref{Dlgx} means that there is no singularity in $B(x,\beta^*_0\lvert X(x)\rvert)$. Next, we analyze the sectional Poincar\'{e} flow in local charts of scaled neighborhoods, the so-called \emph{Liao scaled charts} introduced in \cite{LST79A,LST85,Liao89}, to obtain some uniformity. Similar constructions and discussions can be found in \cite{CY2017,GY,WW19}.
   
   Let $\{e_1,\cdots,e_d\}$ be an orthonormal basis of $\mathbb{R}^d$. For every $x\in M$, choose an orthonormal basis $\{e^x_1,\cdots,e^x_d\}$ of the tangent space $T_xM$. This choice defines a linear isometry $$C_x:\mathbb{R}^d\to T_xM,$$ which provides a metric-preserving coordinate change from the Euclidean space to the tangent space, explicitly given by $$C_x(e_i)=e^x_i,\quad\text{ for $i=1,2,\cdots,d$}.$$ Then, $$\langle C_x(u), C_x(v)\rangle_x=\langle u,v\rangle,\text{ for all }u,v\in\mathbb{R}^d,$$ where $\langle\cdot,\cdot\rangle$ is the standard scalar product on $\mathbb{R}^d$. Using the Riemannian exponential map $\exp_x:T_xM\to M$, we define the composed map $$\text{Exp}_x=\exp_x\circ C_x:\mathbb{R}^d\to M.$$ For every $r>0$, denote by $$\mathbb{R}^d(r)=\left\{v\in\mathbb{R}^d:~\lvert v\rvert\leq r\right\}$$ the closed ball of radius $r$ centered at the origin in $\mathbb{R}^d$. Then, for every $x\in M$, the restriction ${\rm Exp}_x|_{\mathbb{R}^d(r_0)}$ is a $C^{\infty}$ diffeomorphism from $\mathbb{R}^d(r_0)$ onto $B(x,r_0)$. The flow in the neighborhood $B(x,r_0)$ can be locally lifted to $\mathbb{R}^d(r_0)$ via ${\rm Exp}_x$. More precisely, if for a point $y\in\mathbb{R}^d$ and times $t_1<0<t_2$, we have $$\varphi_t({\rm Exp}_x(y))\in B(x,r_0),~\text{ for each $t\in[t_1,t_2]$},$$ then we can define a local flow on $\mathbb{R}^d(r_0)$ by $$\widetilde{\varphi_{x,t}}(y)=\left({\rm Exp}_x|_{\mathbb{R}^d(r_0)}\right)^{-1}\circ\varphi_t\circ{\rm Exp}_x(y),~\text{ for each $t\in[t_1,t_2]$}.$$ In this Liao scaled chart of $x$, the flow $\varphi^X_t$ generated by the vector field $X\in\mathfrak{X}^1(M^d)$ satisfies the differential equation $$\dfrac{dz}{dt}=\widehat{X}_x(z),$$ where $\widehat{X}_x(z)=D\left({\rm Exp}_x|_{\mathbb{R}^d(r_0)}\right)^{-1}\circ X\circ{\rm Exp}_x(z)$. Note that ${\rm Exp}_x(0)=x$ and hence $$\left\lvert\widehat{X}_x(0)\right\rvert=\left\lvert X(x)\right\rvert.$$ Since $M$ is compact, there exists a constant $K_{{\rm E}}>1$ such that $$\max_{x\in M}\left\{\left\lVert D{\rm Exp}_x|_{\mathbb{R}^d(r_0)}\right\rVert,~\left\lVert D\left({\rm Exp}_x|_{\mathbb{R}^d(r_0)}\right)^{-1}\right\rVert\right\}<K_{{\rm E}}.$$ Thus, by enlarging $K_0>0$ if necessary, we may also assume that $$\max_{x\in M}\left\{\max_{p\in\mathbb{R}^d(r_0)}\left\{\left\lvert\widehat{X}_x(p)\right\rvert,~\left\lVert D\widehat{X}_x(p)\right\rVert\right\}\right\}<K_0.$$ Consequently, for every $x\in M$ and every $p\in\mathbb{R}^d(r_0)$, $$\left\lvert\widehat{X}_x(p)-\widehat{X}_x(0)\right\rvert<K_0\lvert p\rvert\leq K_0 r_0.$$
   
   For every regular point $x\in M$, take $$e^x_1=\dfrac{X(x)}{\lvert X(x)\rvert}.$$ Then, in the corresponding coordinates, we have $\widehat{X}_x(0)=\left(\lvert X(x)\rvert,0,0,\cdots,0\right)$. Given $x\in M\backslash\text{Sing}(X)$, for every $y\in B(x,\beta^*_0\lvert X(x)\rvert)$, we may choose an orthonormal basis $\{e^y_1,\cdots,e^y_d\}$ of $T_yM$ with $e^y_1=\dfrac{X(y)}{\lvert X(y)\rvert}$ such that for any two points $y_1,~y_2\in B(x,\beta^*_0\lvert X(x)\rvert)$ and every $j=1,2,\cdots,d$, $$d_S\left(e^{y_1}_j,e^{y_2}_j\right)<2K_1\dfrac{d(y_1,y_2)}{\lvert X(x)\rvert}.$$ Under this orthonormal bases in $T_{B(x,\beta^*_0\lvert X(x)\rvert)}M$, we obtain a uniform estimate on $C^1$ norm of the diffeomorphism ${\rm Exp}_x$, as stated in the following Lemma \ref{Eugx}.   
\begin{Lemma}\label{Eugx}{\rm\cite[Lemma 3.2]{LLL24}}
   There exists a constant $K_2>1$ such that for every $x\in M\backslash\text{Sing}(X)$ and any two points $y_1,y_2\in B(x,\beta^*_0\lvert X(x)\rvert)$, $$\left\lVert\left({\rm Exp}^{-1}_{y_1}\circ{\rm Exp}_{y_2}-id_{\mathbb{R}^d}\right)\big\lvert_{\mathbb{R}^d(r_0/2)}\right\rVert_{C^1}\leq K_2\dfrac{d(y_1,y_2)}{\lvert X(x)\rvert},$$  where $\lVert\cdot\rVert_{C^1}$ denotes the $C^1$ norm.	
\end{Lemma}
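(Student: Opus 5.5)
We have to prove Lemma \ref{Eugx}. The map is ${\rm Exp}^{-1}_{y_1}\circ{\rm Exp}_{y_2}=C_{y_1}^{-1}\circ\exp_{y_1}^{-1}\circ\exp_{y_2}\circ C_{y_2}$. The plan is to compare it with the identity by inserting the parallel transport $P_{y_2\to y_1}$ along the unique minimizing geodesic from $y_2$ to $y_1$. This geodesic exists because $d(y_1,y_2)<2\beta^*_0\lvert X(x)\rvert\le 2\beta_0^*K_0<\rho_S$ after shrinking $\beta^*_0$ if needed. We then split
\[
{\rm Exp}^{-1}_{y_1}\circ{\rm Exp}_{y_2}-id=\bigl(C_{y_1}^{-1}\exp_{y_1}^{-1}\exp_{y_2}C_{y_2}-C_{y_1}^{-1}P_{y_2\to y_1}C_{y_2}\bigr)+\bigl(C_{y_1}^{-1}P_{y_2\to y_1}C_{y_2}-id\bigr),
\]
and bound the $C^1$ norm of each term on $\mathbb{R}^d(r_0/2)$ separately.

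\textbf{First term (purely geometric).} Consider the smooth map $F(y_1,y_2,v)=\exp_{y_1}^{-1}\exp_{y_2}(v)-P_{y_2\to y_1}v$. It is defined on the compact set $\{d(y_1,y_2)\le\rho,\ \lvert v\rvert\le r_0/2\}$, using $\lvert \exp_{y_2}v\rvert$-distance $<r_0$ (shrink $\rho$ so $r_0/2+\rho<r_0$), and it vanishes identically when $y_1=y_2$. By smoothness and compactness, its $C^1$ norm in $v$ is bounded by $K\,d(y_1,y_2)$, with $K$ depending only on $M$; this is the mean value theorem in the $y_1$ variable along the geodesic. Since $C_{y_1}^{-1}$ and $C_{y_2}$ are isometries, the first term has $C^1$ norm at most $K\,d(y_1,y_2)\le K\,d(y_1,y_2)\cdot K_0/\lvert X(x)\rvert$, using $\lvert X(x)\rvert\le K_0$.

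\textbf{Second term (frames).} This term is a linear map, so its $C^1$ norm on the ball is controlled by its operator norm: for a linear map $L$, $\lVert L\rVert_{C^1}\le (1+r_0/2)\lVert L\rVert$. Its matrix entries are $\langle e^{y_1}_i,P_{y_2\to y_1}e^{y_2}_j\rangle-\delta_{ij}$. By the comparison between $d_S$ and parallel transport stated earlier, $\lvert e^{y_1}_j-P_{y_2\to y_1}e^{y_2}_j\rvert\le 2d_S(e^{y_1}_j,e^{y_2}_j)<4K_1 d(y_1,y_2)/\lvert X(x)\rvert$. This bound is applicable because $d_S$ is small once $\beta^*_0$ is small: $2K_1\cdot 2\beta^*_0<\rho_S$. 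Hence each entry is $O(K_1 d(y_1,y_2)/\lvert X(x)\rvert)$, and the operator norm is at most $4dK_1\,d(y_1,y_2)/\lvert X(x)\rvert$.

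\textbf{Conclusion.} Summing the two bounds gives $K_2=KK_0+4d(1+r_0/2)K_1+1$, which proves Lemma \ref{Eugx}. The main obstacle is the second term. The frames $e^y_1=X(y)/\lvert X(y)\rvert$ vary on the scale $\lvert X(x)\rvert$ rather than on a uniform scale, so any estimate must go through the specially chosen frames satisfying the stated $d_S$ bound. That bound itself comes from Lemma \ref{Dlgx}(2) together with Gram–Schmidt completion of the first vector into a smoothly varying frame. One must also check that the first term's estimate, although only of size $d(y_1,y_2)$, is absorbed into the scaled bound, which works because $\lvert X(x)\rvert\le K_0$.
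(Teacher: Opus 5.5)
Your proposal is correct. The paper does not prove this lemma itself. It is quoted from \cite[Lemma 3.2]{LLL24}, and the frame condition $d_S(e^{y_1}_j,e^{y_2}_j)<2K_1d(y_1,y_2)/\lvert X(x)\rvert$ is posited just before it as a standing hypothesis. So you are supplying a self-contained proof rather than a variant of an in-paper argument.

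Inserting the parallel transport $P_{y_2\to y_1}$ is the natural decomposition, and it shows exactly why the bound carries the factor $1/\lvert X(x)\rvert$:
\begin{itemize}
\item the purely Riemannian discrepancy $\exp_{y_1}^{-1}\exp_{y_2}-P_{y_2\to y_1}$ is uniformly $O(d(y_1,y_2))$, and it is absorbed into the scaled bound using $\lvert X(x)\rvert\le K_0$;
\item the only non-uniform ingredient, the $X$-adapted frames, enters through a linear map whose entries $\langle e^{y_1}_i,P_{y_2\to y_1}e^{y_2}_j-e^{y_1}_j\rangle$ are controlled directly by the Sasaki comparison.
\end{itemize}

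A few minor points.

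\textbf{Shrinking $\beta^*_0$.} This is consistent with the paper's conventions, but you do not need it. Since $10\beta^*_0K_1<r_0$ and $K_0<K_1$, you already have $d(y_1,y_2)<2\beta^*_0K_0<r_0/5$. Hence ${\rm Exp}^{-1}_{y_1}\circ{\rm Exp}_{y_2}$ is defined on $\mathbb{R}^d(r_0/2)$, and its $C^1$ norm is uniformly bounded.
\begin{itemize}
\item Pairs with $d(y_1,y_2)/\lvert X(x)\rvert\ge c$ therefore satisfy the estimate trivially, with $K_2\ge {\rm const}/c$.
\item For $d(y_1,y_2)/\lvert X(x)\rvert<c$ with $c\le\min\{\rho_S/K_0,\rho_S/(2K_1)\}$, both preconditions of the Sasaki comparison hold automatically: $d(y_1,y_2)<\rho_S$ and $d_S(e^{y_1}_j,e^{y_2}_j)\le\rho_S$.
\end{itemize}

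\textbf{The mean value step for $F$.} This step needs an identification of the varying tangent spaces $T_{y_1}M$, for example by parallel transport back to $T_{y_2}M$ along the geodesic, or by an isometric embedding of $M$. It also uses smoothness of $F$ and $D_vF$ on a neighbourhood of the compact parameter set. Both are routine, and your choice $r_0/2+\rho<r_0$ leaves the required room.

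\textbf{Your closing aside.} You do not need the remark on where the frame bound comes from, since the bound is a hypothesis. As phrased it is also slightly inaccurate. Lemma \ref{Dlgx}(2) only compares $X(y)/\lvert X(y)\rvert$ with the centre $x$. A two-point bound of order $d(y_1,y_2)/\lvert X(x)\rvert$ instead requires a Lipschitz estimate for $X/\lvert X\rvert$ on the whole ball, which follows from $X$ being Lipschitz and $\lvert X\rvert$ being comparable to $\lvert X(x)\rvert$ there.
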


\subsection{Shadowing lemma}
   The concept of \textbf{shadowing} was introduced by Sina\u{\i} \cite{Si72}, who proved that Anosov diffeomorphisms satisfy the shadowing property and moreover, every pseudo-orbit is shadowed by a unique true orbit. In essence, shadowing describes the situation in which a true orbit of a dynamical system lies uniformly near a given pseudo-orbit. The shadowing lemma for uniformly hyperbolic diffeomorphisms was fully established by Bowen \cite{B70}, and has since been extensively studied (see, e.g., \cite{CLP89,P99}). For flows, however, the shadowing lemma is considerably more intricate than for diffeomorphisms.
   
   In \cite{LST80}, Liao introduced the notion of \textquotedblleft quasi-hyperbolic strings\textquotedblright, indicating that certain special types of pseudo-orbits can still be shadowed by true orbits even in non-uniformly hyperbolic systems. Liao also provided the first shadowing lemma for single quasi-hyperbolic strings, both for diffeomorphisms \cite{LST79} and for flows \cite{LST80,LST85}, showing that if the head and tail of such a string are sufficiently close, it is shadowed by a periodic orbit. Later, Gan \cite{Gan2002} extended this result to sequences of quasi-hyperbolic strings in the case of diffeomorphisms. Han and Wen \cite{HW18} further generalized Gan’s work to $C^1$ vector fields, establishing a shadowing lemma for sequences of quasi-hyperbolic strings in flows. In what follows, we outline Liao’s shadowing lemma for flows \cite{LST80,LST85}, which forms an essential foundation for the present paper.  
\begin{Definition}
   Let $\Lambda$ be an invariant set of the flow $\varphi_t$ and let $E\subset\mathcal{N}_{\Lambda\backslash\text{Sing}(X)}$ be an invariant subbundle of the scaled linear Poincar\'{e} flow $\psi^*_t$. For constants $C>0$, $\eta>0$ and $T>0$, a point $x\in\Lambda\backslash\text{Sing}(X)$ is called \textbf{$(C,\eta,T,E)$-$\psi^*_t$-contracting} if there exists an increasing sequence of times $0=t_0<t_1<\cdots<t_n<\cdots$ with $t_n\rightarrow+\infty$ and $t_{i+1}-t_i\leq T$, for every $i\in\mathbb{N}$, such that for every $n\in\mathbb{N}$,  $$\prod_{i=0}^{n-1}\left\lVert\psi^*_{t_{i+1}-t_i}|_{E(\varphi_{t_i}(x))}\right\rVert\leq Ce^{-\eta t_n}.$$ A point $x\in\Lambda\backslash\text{Sing}(X)$ is called \textbf{$(C,\eta,T,E)$-$\psi^*_t$-expanding} if it is $(C,\eta,T,E)$-$\psi^*_t$-contracting for the vector field $-X$. 	
\end{Definition}

\begin{Definition}
   Given $\eta>0$, $T>0$. For every $x\in M\backslash\text{Sing}(X)$ and $T_0>T$, the orbit arc $\varphi_{[0,T_0]}$ is called \textbf{$(\eta,T)$-$\psi^*_t$-quasi hyperbolic} with respect to a direct sum splitting $\mathcal{N}_x=E(x)\oplus F(x)$, if there exists a time partition {\rm :} $$0=t_0<t_1<\cdots<t_k=T_0,\text{ with $t_{i+1}-t_i\leq T$ for $i=0,\cdots,k-1$},$$ such that for every $n=0,1,\cdots,k-1$, $$\prod_{i=0}^{n-1}\left\lVert\psi^*_{t_{i+1}-t_i}|_{E(\varphi_{t_i}(x))}\right\rVert\leq e^{-\eta t_n},\quad\prod_{i=n}^{k-1}m\left(\psi^*_{t_{i+1}-t_i}|_{F(\varphi_{t_i}(x))}\right)\geq e^{\eta(t_k-t_n)},$$ and $$\dfrac{\left\lVert\psi^*_{t_{n+1}-t_n}|_{E(\varphi_{t_n}(x))}\right\rVert}{m\left(\psi^*_{t_{n+1}-t_n}|_{F(\varphi_{t_n}(x))}\right)}\leq e^{-\eta(t_{n+1}-t_n)}.$$      	
\end{Definition}

   The following shadowing lemma for singular flows was given by Liao \cite{LST85}, which provides us with a method to find periodic points. 
\begin{Theorem}\label{Shadow}
   Assume that $X\in\mathfrak{X}^1(M)$. Let $\Lambda\subset M\backslash\text{Sing}(X)$ be an invariant set admitting a dominated splitting $\mathcal{N}_\Lambda=E\oplus F$ with respect to the scaled linear Poincar\'{e} flow. Given $\eta>0$ and $T>0$. For $\alpha>0$ and $\varepsilon>0$, there exist a constant $\mathcal{D}=\mathcal{D}(\alpha,\varepsilon)>0$ such that for every $(\eta,T)$-$\psi^*_t$-quasi hyperbolic orbit segment $\varphi_{[0,T_0]}(x)$ satisfying that
\begin{itemize}
   \item $d(x,\text{Sing}(X))>\alpha$ and $d(\varphi_{T_0}(x),\text{Sing}(X))>\alpha${\rm;}
		
   \item $x\in\Lambda$, $\varphi_{T_0}(x)\in\Lambda$ and $d(x,\varphi_{T_0}(x))<\mathcal{D}${\rm;} 
\end{itemize}  
   there exist a strictly increasing $C^1$ function $\theta:[0,T_0]\to\mathbb{R}$ and a periodic point $p\in M\backslash\text{Sing}(X)$ with the following properties:   
\begin{description}
   \item[(1)] $\theta(0)=0$ and $1-\varepsilon<\theta'(t)<1+\varepsilon$, for every $t\in[0,T_0]${\rm;}
		
   \item[(2)] $p$ is a periodic point with period $\theta(T_0)$ {\rm:} $\varphi_{\theta(T_0)}(p)=p${\rm;}
		
   \item[(3)] $d(\varphi_t(x),\varphi_{\theta(t)}(p))<\varepsilon\lvert X(\varphi_t(x))\rvert$, for every $t\in[0,T_0]${\rm.}
\end{description}    
\end{Theorem}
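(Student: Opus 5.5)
This is Liao's shadowing lemma, quoted from \cite{LST85}. I would prove it the way Liao does: lift the orbit arc to Liao's scaled charts, run a hyperbolic fixed-point argument for the resulting sequence of local maps, and then reparametrize time.

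The first step is to fix the partition $0=t_0<\cdots<t_k=T_0$ given by quasi-hyperbolicity and put $x_i=\varphi_{t_i}(x)$. In the normal slice $\mathcal{N}_{x_i}$ I use the scaled coordinates $v\mapsto \mathrm{Exp}_{x_i}(\lvert X(x_i)\rvert v)$ restricted to a ball of radius $\beta^*_0$. I then define the sectional Poincar\'e maps $P_i$ from a neighborhood of $0$ in the slice at $x_i$ to the slice at $x_{i+1}$, obtained by following the flow until it hits the next slice.

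Lemmas \ref{Dlgx} and \ref{Eugx} are used here for uniformity. They show that in these rescaled coordinates each $P_i$ is defined on a ball of \emph{uniform} radius $\beta>0$, independent of how close $x_i$ is to $\mathrm{Sing}(X)$. They also give $DP_i(0)=\psi^*_{t_{i+1}-t_i}|_{\mathcal{N}_{x_i}}$, and the map $v\mapsto DP_i(v)$ is uniformly continuous with a modulus that does not depend on $i$ or $x$. The point of this step is that the $\lvert X\rvert$-scaling turns the non-compact linear Poincar\'e flow into a uniformly $C^1$ family, because $t_{i+1}-t_i\le T$ and the flow moves a scaled ball by a bounded factor over bounded time.

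The second step is to solve the closing problem. I look for points $v_i$ with $P_i(v_i)=v_{i+1}$ for $i=0,\dots,k-1$, together with $v_k$ matched to $v_0$ through the chart change between the slices at $x_k$ and $x_0$. Lemma \ref{Eugx} makes this chart change $C^1$-close to the identity, up to an error of order $d(x_0,x_k)/\lvert X(x)\rvert<\mathcal{D}/\alpha$, using $d(x,\mathrm{Sing}(X))>\alpha$.

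I work in the space of sequences $(v_i)$ with sup norm, split along $E\oplus F$, and use the adapted norms coming from the quasi-hyperbolic products. I solve the stable coordinates forward and the unstable coordinates backward, which turns the problem into a fixed point $(v_i)=\mathcal{T}(v_i)$. The quasi-hyperbolicity inequalities, namely the products along $E$ being at most $e^{-\eta t_n}$, the products along $F$ being at least $e^{\eta(t_k-t_n)}$, and the stepwise ratio bound, make $\mathcal{T}$ a contraction on a small ball. For this I choose $\beta$ small enough that the nonlinear error $\sup_{\lvert v\rvert\le\beta}\lVert DP_i(v)-DP_i(0)\rVert$ is much smaller than $1-e^{-\eta}$.

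The dominated splitting is used to extend $E$ and $F$ to continuous cone fields near $\Lambda$. This ensures the splitting at $x_k$ and at $x_0$ are compatible under the near-identity chart change, since both points lie in $\Lambda$ and are close to each other. The fixed point gives a closed orbit of the composed sectional maps, and hence a periodic point $p=\mathrm{Exp}_{x_0}(\lvert X(x_0)\rvert v_0)$. It lies within $\varepsilon\lvert X(x_0)\rvert$ of $x_0$ once $\mathcal{D}$ is small relative to $\alpha$, $\varepsilon$ and the contraction constants.

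The last step is the time reparametrization. I define $\theta(t_i)$ as the accumulated hitting times of $p$'s orbit on the successive slices and interpolate on each $[t_i,t_{i+1}]$. Since the scaled displacement $\lvert v_i\rvert$ is small, the hitting time of the orbit of $p$ differs from $t_{i+1}-t_i$ by a relative error controlled by $\lvert v_i\rvert$ through Lemma \ref{Dlgx}(1),(3). This gives $1-\varepsilon<\theta'<1+\varepsilon$, after smoothing $\theta$ into a $C^1$ function, and it gives property (3) in the scaled form $d(\varphi_t(x),\varphi_{\theta(t)}(p))<\varepsilon\lvert X(\varphi_t(x))\rvert$.

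The main obstacle is the uniformity in the first step. The partition points $x_i$ may pass arbitrarily close to singularities, where $\psi_t$ itself degenerates, so I must show that the scaled sectional maps have a uniform domain size and a uniform $C^1$ modulus in terms of $\psi^*_t$. I would try first to prove this by comparing $\widehat{X}_{x_i}$ with the linearized field on the ball of radius $\beta^*_0\lvert X(x_i)\rvert$ via Lemma \ref{Dlgx}, and to integrate over time at most $T$ using Gronwall's inequality.
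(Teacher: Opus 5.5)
The paper gives no proof of Theorem~\ref{Shadow}; it quotes the result from Liao \cite{LST85} (see also \cite{HW18}). Your architecture is the standard one: scaled sectional maps $P_i$ with $A_i:=DP_i(0)=\psi^*_{t_{i+1}-t_i}$, a fixed-point formulation of the closing problem, then a time reparametrization. The step you call the main obstacle, uniform domains and a uniform $C^1$ modulus for the $P_i$, is Liao's well-known uniformity, and your Gronwall argument does deliver it.

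The genuine gap is the sentence claiming that the quasi-hyperbolicity inequalities make $\mathcal{T}$ a contraction. That is the real content of Liao's lemma, and with the norms you describe it fails. Quasi-hyperbolicity only controls $E$-products that start at $t_0$ and $F$-products that end at $t_k$. A middle product $\prod_{i=n}^{m-1}\lVert A_i|_E\rVert$ can therefore grow exponentially in $t_m-t_n$: there may be a long stretch where $E$ expands, paid for by strong contraction earlier.

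In the sup norm this breaks the linear inverse. Write $A^s_j=A_j|_E$. The inverse of $(v_i)\mapsto(v_{i+1}-A_iv_i)$, applied to data $(r_i)$, has stable part $\sum_j A^s_{i-1}\cdots A^s_j\,r^s_j$. This is not bounded independently of $k$, so no choice of $\beta$ makes $\mathcal{T}$ a contraction uniformly in the length of the arc.

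Rescaling $E$ and $F$ separately by those products does make every $A_i$ uniformly hyperbolic. However, the two rescalings differ at $t_0$ by roughly the total $F$-expansion, and at $t_k$ by the total $E$-contraction. So the $E\to F$ and $F\to E$ parts of $DP_i(v)-A_i$ are amplified by factors growing exponentially with $T_0$. Also, a small ball in these norms need not lie inside the charts.

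The missing idea is a single scalar weight $w_i$ applied to both components. Put $\tau_i=t_{i+1}-t_i$, $a_i=\log\lVert A_i|_E\rVert+\eta\tau_i/2$ and $b_i=\log m(A_i|_F)-\eta\tau_i/2$.
\begin{itemize}
\item The stepwise ratio bound is exactly $a_i\le b_i$.
\item Any weights with $\log(w_{i+1}/w_i)\in[a_i,b_i]$ make each step hyperbolic with rate $e^{-\eta\tau_i/2}$ in the norm $\lvert v\rvert/w_i$.
\item Nonlinear terms are only multiplied by $w_i/w_{i+1}\le e^{-a_i}$, which is uniformly bounded.
\item The product inequalities make the weight periodic and maximal at the junction. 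Normalize $w_0=1$ and set $\log w_n=\max\{\sum_{i<n}a_i,\,-\sum_{i\ge n}b_i\}$. This has admissible increments, vanishes at $n=0$ and $n=k$, and is negative in between.
\end{itemize}
The closing jump then yields a fixed point with $\lvert v_i\rvert\lesssim w_i\,\mathcal{D}/c(\alpha)\le\mathcal{D}/c(\alpha)$. This stays inside the uniform charts whatever $k$ is.

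There are also three smaller corrections.
\begin{itemize}
\item The scaled endpoint distance is bounded by $\mathcal{D}/c(\alpha)$, where $c(\alpha)=\min\{\lvert X(y)\rvert: d(y,\mathrm{Sing}(X))\ge\alpha\}$, not by $\mathcal{D}/\alpha$.
\item The per-step rate is $e^{-\eta\tau_i}$, not $e^{-\eta}$. So you should first merge short partition intervals, which all three conditions survive, to make every $\tau_i$ comparable to $T$. Your relative hitting-time estimate needs this too.
\item Bounding the projections to $E$ and $F$ along an arc that may pass near singularities requires a uniform lower bound on the angle between $E$ and $F$. Continuity of the splitting near $x_0,x_k$ is not enough. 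The bound follows from the domination of $\psi^*_t$ together with the uniform boundedness of $\psi^*_{\pm t}$ for bounded $t$.
\end{itemize}
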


\section{Density of periodic measures: proof of Main Theorem}\label{MA}
   In this section, we prove the \textbf{Main Theorem}. Let $X\in\mathfrak{X}^*(M)$ be a star vector field, and let $\mu\in\mathcal{M}(X)$ be an ergodic hyperbolic invariant measure of $X$. Shi, Gan and Wen \cite{SGW14} proved that $\mu$ is hyperbolic with respect to the scaled linear Poincar\'{e} flow $\psi^*_t$. Consequently, we have the Oseledec hyperbolic splitting $$\mathcal{N}=E^s\oplus E^u.$$ By analyzing the Lyapunov exponents of the scaled linear Poincar\'{e} flow $\psi^*_t$ with respect to $\mu$, we complete the proof of the \textbf{Main Theorem}.   
\begin{proof}[Proof of \textbf{Main Theorem}]
   We divide the proof into two cases based on the Lyapunov exponents of $\psi^*_t$ with respect to $\mu$.
   
   \textbf{Case 1: All exponents have the same sign}. Suppose all Lyapunov exponents of $\psi^*_t$ with respect to $\mu$ are negative (or all are positive). Then $\mu$ is an ergodic measure supported on an attracting (respectively, repelling) periodic orbit. 
    
   \textbf{Case 2: Mixed signs}. Suppose the Lyapunov exponents of $\psi^*_t$ with respect to $\mu$ are neither all negative nor all positive; equivalently, the stable bundle $E^s$ and the unstable bundle $E^u$ are both nontrivial. In this situation, the Oseledec splitting itself exhibits a dominated structure. Precisely, we have the following claim.    
\begin{claim}
   The Oseledec splitting $\mathcal{N}_\Gamma=E^s\oplus E^u$ is a dominated splitting for the scaled linear Poincar\'{e} flow $\psi^*_t$.
\end{claim}
\begin{proof}[Proof of Claim]
   For a nontrivial ergodic measure $\mu$ of a star vector field $X\in\mathfrak{X}^*(M)$, Li, Shi, Wang and Wang \cite{LSWW20} proved that the scaled linear Poincar\'{e} flow $\psi^*_t$ admits a dominated splitting $$\mathcal{N}_{\text{supp}(\mu)\backslash\text{Sing}(X)}=E\oplus F,$$ where $\dim(E)=\text{Ind}(\mu)$. Consequently, it suffices to prove that this dominated splitting coincides with the Oseledets splitting $\mathcal{N}_\Gamma=E^s\oplus E^u$.
   
   Let $$\lambda^-(\mu)=\lim\limits_{t\to\pm\infty}\dfrac{1}{t}\log\left\lVert\psi^*_t|_{E^s}\right\rVert,\qquad\lambda^+(\mu)=\lim\limits_{t\to\pm\infty}\dfrac{1}{t}\log m\left(\psi^*_t|_{E^u}\right).$$ Given a point $x\in\left(\Gamma\cap\text{supp}(\mu)\right)\backslash\text{Sing}(X)$, fix $$\epsilon\in\left(0,\dfrac{\min\{\lvert\lambda^-(\mu)\rvert,~\lambda^+(\mu)\}}{8}\right)$$ sufficiently small. For the Oseledec splitting $\mathcal{N}_x=E^s(x)\oplus E^u(x)$, there exists a positive constant $T_0>0$ such that for every $t\geq T_0$, $$\left\lvert\psi^*_t(v)\right\rvert\leq e^{(\lambda^-(\mu)+\epsilon)t}\lvert v\rvert,\text{ for every nonzero vector }v\in E^s(x),\text{ and }$$ $$m\left(\psi^*_t(v)\right)\geq e^{(\lambda^+(\mu)-\epsilon)t}\lvert v\rvert,\text{ for every nonzero vector }v\in E^u(x).$$ By the definition of dominated splitting, there exist two constants $C\geq 1$ and $\lambda>0$ such that for the splitting $\mathcal{N}_x=E(x)\oplus F(x)$ and every $t\geq 0$, $$\dfrac{\left\lVert\psi^*_t|_{E(x)} \right\rVert}{\left\lVert\psi^*_t|_{F(x)}\right\rVert}\leq Ce^{-\lambda t}.$$ Now suppose, for contradiction, that $$E^s(x)\nsubseteq E(x).$$ Then, there exists a nonzero vector $u\in E(x)\backslash E^s(x)$. On the other hand, since $$\text{dim}(E^s(x))=\text{Ind}(\mu)=\text{dim}(E(x)),$$ we also have $$E(x)\nsubseteq E^s(x).$$ Thus, there exists a nonzero vector $v\in E^s(x)\backslash E(x)$. Therefore, there exists a vector decomposition $$u=u_1+u_2,\text{ where $u_1\in E^s(x)$, $u_2\in E^u(x)$ and $u_2\neq 0$},$$ of $u$ under the splitting $\mathcal{N}_x=E^s(x)\oplus E^u(x)$. Similarly, there exists a vector decomposition $$v=v_1+v_2,\text{ where $v_1\in E(x)$, $v_2\in F(x)$ and $v_2\neq 0$},$$ of $v$ under the splitting $\mathcal{N}_x=E(x)\oplus F(x)$. Consequently, 
\begin{equation*}
\begin{aligned}
   \dfrac{\lvert\psi^*_t(u)\rvert}{\lvert\psi^*_t(v)\rvert}&=\dfrac{\lvert\psi^*_t(u_1)+\psi^*_t(u_2)\rvert}{\lvert\psi^*_t(v)\rvert}\geq\dfrac{\lvert\psi^*_t(u_2)\lvert-\rvert\psi^*_t(u_1)\rvert}{\lvert\psi^*_t(v)\rvert}\geq\dfrac{e^{(\lambda^+(\mu)-\epsilon)t}\lvert u_2\rvert-e^{(\lambda^-(\mu)+\epsilon)t}\lvert u_1\rvert}{e^{(\lambda^-(\mu)+\epsilon)t}\lvert v\rvert}\\&=\dfrac{e^{(\lambda^+(\mu)-\lambda^-(\mu)-2\epsilon)t}\lvert u_2\rvert-\lvert u_1\rvert}{\lvert v\rvert}\rightarrow+\infty\quad\text{(as $t\rightarrow+\infty$)},
\end{aligned}
\end{equation*}
	and 
\begin{equation*}
\begin{aligned}
   \dfrac{\lvert\psi^*_t(v)\rvert}{\lvert\psi^*_t(u)\rvert}&=\dfrac{\lvert\psi^*_t(v_1)+\psi^*_t(v_2)\rvert}{\lvert\psi^*_t(u)\rvert}\geq\dfrac{\lvert\psi^*_t(v_2)\lvert-\rvert\psi^*_t(v_1)\rvert}{\lvert\psi^*_t(u)\rvert}\geq\dfrac{\lvert\psi^*_t(v_2)\lvert-Ce^{-\lambda t}\lvert\psi^*_t(v_2)\lvert}{\lvert\psi^*_t(u)\rvert}\\&=\left(1-Ce^{-\lambda t}\right)\dfrac{\lvert\psi^*_t(v_2)\lvert}{\lvert\psi^*_t(u)\rvert}\geq\left(1-Ce^{-\lambda t}\right)C^{-1}e^{\lambda t}\dfrac{\lvert v_2\rvert}{\lvert u\rvert}\rightarrow+\infty\quad\text{(as $t\rightarrow+\infty$)}.
\end{aligned}
\end{equation*}
   This leads to a contradiction. Therefore, the assumption $E^s(x)\nsubseteq E(x)$ is false, and hence $$E^s(x)=E(x).$$ Because $\dim E^s(x)=\dim E(x)$, it follows that $$E^u(x)=F(x)$$ as well. Thus, the two splittings $\mathcal{N}_\Gamma=E^s\oplus E^u$ and $\mathcal{N}_\Gamma=E\oplus F$ coincide. This completes the proof of the claim.          
\end{proof}	

   Let $C(M)$ be the space of all continuous real-valued functions on $M$, and let $\left\{f_i\right\}_{i=1}^{+\infty}$ be a countable dense subset of $C(M)$. For any two measures $\mu,~\nu\in\mathcal{M}(X)$, define $$d_{\mathcal{M}}(\mu,\nu)=\sum_{i=1}^{+\infty}\dfrac{\left\lvert\int_Mf_id\mu-\int_Mf_id\nu\right\rvert}{2^i\left\lVert f_i\right\rVert}.$$ Then $d_{\mathcal{M}}(\cdot,\cdot)$ is a metric which induces the weak$^*$ topology on $\mathcal{M}(X)$. Given an ergodic hyperbolic invariant regular measure $\mu$, for every $\varepsilon>0$, we will prove that there exists a periodic measure $\mu_p$ such that $$d_{\mathcal{M}}(\mu,\mu_p)<\varepsilon.$$ First, choose a positive integer $n$ sufficiently large so that for every invariant measure $\nu$, the following holds
\begin{equation*}
   \sum_{i=n+1}^{+\infty}\dfrac{\left\lvert\int_Mf_id\mu-\int_Mf_id\nu\right\rvert}{2^i\left\lVert f_i\right\rVert}\leq\sum_{i=n+1}^{+\infty}\dfrac{1}{2^{i-1}}<\dfrac{\varepsilon}{2}.\tag{$\vartriangle$}
\end{equation*}   
   According to the Birkhoff Ergodic Theorem, there exists a $\varphi$-invariant set $\Lambda_B$ of full $\mu$-measure such that for every $x\in\Lambda_B$ and every $f\in C(M)$, $$\lim_{T\rightarrow+\infty}\dfrac{1}{T}\int^{T}_{0}f(\varphi_t(x))dt=\int_Mfd\mu.$$ Thus, there exists a constant $T_1>1$ such that for all $T\geq T_1$, all $x\in\Lambda_B$ and every $i=1,2,\cdots,n$,
\begin{equation*}
   \left\lvert\dfrac{1}{T}\int^{T}_{0}f_i(\varphi_t(x))dt-\int_Mf_id\mu\right\rvert<\dfrac{\varepsilon}{4n}\cdot\min_{1\leq i\leq n}\left\{2^i\left\lVert f_i\right\rVert\right\}. \tag{$\vartriangle\vartriangle$}
\end{equation*}   
	  
   By the claim, the hyperbolic Oseledec splitting $$\mathcal{N}_\Gamma=E^s\oplus E^u$$ of the scaled linear Poincar\'{e} flow with respect to $\mu$ is a dominated splitting. According to \cite[Lemma 5.1]{WYZ}, for every $\epsilon>0$, there exists a constant $T_2=T(\epsilon)>0$ such that for $\mu$-a.e $x\in M$ and every $T\geq T_2$, we have that $$\lim_{J\to+\infty}\dfrac{1}{JT}\sum_{i=0}^{J-1}\log\left\lVert\psi^*_T|_{E^s(\varphi_{iT})}\right\rVert\text{ exists and is contained in }[\lambda^-(\mu),\lambda^-(\mu)+\epsilon),$$ and $$\lim_{J\to+\infty}\dfrac{1}{JT}\sum_{i=0}^{J-1}\log\left\lVert\psi^*_{-T}|_{E^u(\varphi_{-iT})}\right\rVert\text{ exists and is contained in }(-\lambda^+(\mu)-\epsilon,-\lambda^+(\mu)].$$ Let $$\chi=\min\left\{\left\lvert\lambda^-(\mu)\right\rvert,~\lambda^+(\mu)\right\},~T_0=\max\left\{T_1,~T_2\right\},~\eta_0=(\chi-\epsilon/4)T_0.$$ Then there exists a positive integer $N=N(T_0)$ such that for every integer $J\geq N$ and $\mu$-a.e. $x\in M$, we have that $$\prod_{i=0}^{J-1}\left\lVert\psi^*_{T_0}|_{E^s(\varphi_{iT_0}(x))}\right\rVert\leq e^{-(\chi-\epsilon/4)JT_0},~\prod_{i=0}^{J-1}m\left(\psi^*_{T_0}|_{E^u(\varphi_{iT_0}(x))}\right)\geq e^{(\chi-\epsilon/4)JT_0},$$ and $$\dfrac{\left\lVert\psi^*_{T_0}|_{E^s(x)}\right\rVert}{m\left(\psi^*_{T_0}|_{E^u(x)}\right)}\leq e^{-T_0(\chi-\epsilon/4)}.$$ For every $C>0$, the \textbf{Pesin block } $\Lambda^{T_0}_{\eta_0}(C)$ is defined by
\begin{equation*}
\begin{aligned}
   \Lambda^{T_0}_{\eta_0}(C)=\Bigg\{x\in\Gamma:&\prod_{i=0}^{J-1}\left\lVert\psi^*_{T_0}|_{E^s(\varphi_{iT_0}(x))}\right\rVert\leq Ce^{-J\eta_0},~\forall~ J\geq 1,\\ &\prod_{i=0}^{J-1}m\left(\psi^*_{T_0}|_{E^u(\varphi_{iT_0}(x))}\right)\geq C^{-1}e^{J\eta_0},~\forall~ J\geq 1,~d(x,\text{Sing}(X))\geq\dfrac{1}{C}\Bigg\}.
\end{aligned}   
\end{equation*}
   According to \cite[Proposition 5.3]{WYZ}, every set $\Lambda^{T_0}_{\eta_0}(C)$ is compact and $$\mu\left(\Lambda^{T_0}_{\eta_0}(C)\right)\to 1\text{ as }C\to+\infty.$$ 
   
   Denote $$K=\max_{1\leq i\leq n}\left\{\left\lVert f_i\right\rVert\right\}.$$ Take $\gamma>0$ sufficiently small such that $$\left(2K+1\right)\gamma<\dfrac{\varepsilon}{4n}\cdot\min_{1\leq i\leq n}\{2^i\left\lVert f_i\right\rVert\}.$$ Recall that $\left\lvert X(x)\right\rvert\leq K_0$, for every $x\in M$. Since each $f_i$ ($1\leq i\leq n$) is uniformly continuous on $M$, there exists a constant $\xi\in[0,\gamma]$ sufficiently small such that for any two points $x,~y\in M$ with $d(x,y)\leq\xi K_0$ and every $i=1,\cdots,n$, $$\left\lvert f_i(x)-f_i(y)\right\rvert<\gamma.$$ Take $C>0$ sufficiently large such that $\mu\left(\Lambda^{T_0}_{\eta_0}(C)\right)$ can be made arbitrarily close to $1$. For this fixed $C$, there is a positive integer $j_0=j_0(C)\in\mathbb{N}^{+}$ such that $C<e^{\frac{j_0T_0\epsilon}{4}}$. Consequently, for every $x\in\Lambda^{T_0}_{\eta_0}(C)$ and every integer $J\geq 1$, it holds that $$\prod_{i=0}^{J-1}\left\lVert\psi^*_{j_0T_0}|_{E^s(\varphi_{ij_0T_0}(x))}\right\rVert\leq e^{-(\chi-\epsilon/2)Jj_0T_0},~\prod_{i=0}^{J-1}m\left(\psi^*_{j_0T_0}|_{E^u(\varphi_{ij_0T_0}(x))}\right)\geq e^{(\chi-\epsilon/2)Jj_0T_0}.$$ Setting $T=j_0T_0$, $\eta=(\chi-\epsilon/2)j_0T_0$, we consider the set
\begin{equation*}
\begin{aligned}
   \Lambda^{T}_{\eta}(C)=\Bigg\{x\in\Gamma:&\prod_{i=0}^{J-1}\left\lVert\psi^*_{T}|_{E^s(\varphi_{iT}(x))}\right\rVert\leq e^{-J\eta},~\forall~ J\geq 1,\\ &\prod_{i=0}^{J-1}m\left(\psi^*_{T}|_{E^u(\varphi_{iT}(x))}\right)\geq e^{J\eta},~\forall~ J\geq 1,~d(x,\text{Sing}(X))\geq\dfrac{1}{C}\Bigg\}.
\end{aligned}   
\end{equation*}	
   Take a point $x_0\in\Lambda^T_\eta(C)\cap\text{supp}(\mu)$. By the continuity of $X(x)$ and since $X(x_0)\neq0$, there exists a constant $r>0$ sufficiently small such that for every $x\in B(x_0,r)$, $$B(x_0,r)\subset B\left(x,\xi\lvert X(x)\right\rvert).$$ Let $\varepsilon'=\min\left\{\varepsilon,~\xi\right\}$. For every $\alpha\in(0,1/C)$, Theorem \ref{Shadow} provides a positive constant $$\mathcal{D}=\mathcal{D}(\alpha,\varepsilon')>0.$$ Since $\mu\left(B(x_0,r)\cap\Lambda^T_\eta(C)\cap\text{supp}(\mu)\cap\Lambda_B\right)>0$, by the Poincar\'{e} Recurrence Theorem, there exist a point $y\in B(x_0,r)\cap\Lambda^T_\eta(C)\cap\Lambda_B$ and a sufficiently large integer $l$ such that $$\varphi_{lT}(y)\in B(x_0,r)\cap\Lambda^T_\eta(C)\cap\text{supp}(\mu)\cap\Lambda_B\text{ and } d(y,\varphi_{lT}(y))<\mathcal{D}.$$ Thus, we obtain an $(\eta,T)$-$\psi^*_t$-quasi hyperbolic orbit segment $\varphi_{[0,lT]}(y)$ satisfying
\begin{itemize}
   \item $d(y,\text{Sing}(X))>\alpha$ and $d(\varphi_{lT}(y),\text{Sing}(X))>\alpha${\rm;}
		
   \item $y\in\Lambda^T_\eta(C)$, $\varphi_{lT}(y)\in\Lambda^T_\eta(C)$ and $d(y,\varphi_{lT}(y))<\mathcal{D}${\rm.} 
\end{itemize}
   By Theorem \ref{Shadow}, there exist a strictly increasing $C^1$ function $\theta:[0,lT]\to\mathbb{R}$ and a periodic point $p\in M$ such that   
\begin{description}
   \item[(1)] $\theta(0)=0$ and $1-\gamma\leq1-\xi<\theta'(t)<1+\xi\leq1+\gamma$, for every $t\in[0,lT]${\rm;}
		
   \item[(2)] $p$ is a periodic point with period $\theta(lT)$ {\rm:} $\varphi_{\theta(lT)}(p)=p${\rm;}
		
   \item[(3)] $d(\varphi_t(y),\varphi_{\theta(t)}(p))<\varepsilon'\lvert X(\varphi_t(y))\rvert<\xi\lvert X(\varphi_t(y))\rvert$, for every $t\in[0,lT]${\rm.}
\end{description}
   Therefore, for each $i=1,2,\cdots,n$, we have that 
\begin{equation*}
\begin{aligned}
   \left\lvert\dfrac{1}{\theta(lT)}\int_{0}^{\theta(lT)}f_i(\varphi_t(p))dt-\dfrac{1}{lT}\int_{0}^{\theta(lT)}f_i(\varphi_t(p))dt\right\rvert&\leq\left\lvert\dfrac{\theta(lT)}{lT}-1\right\rvert\cdot\dfrac{1}{\theta(lT)}\int_{0}^{\theta(lT)}f_i(\varphi_t(p))dt\\&\leq\gamma K.
\end{aligned}   
\end{equation*} 
   and    
\begin{equation*}
\begin{aligned}
   &\left\lvert\dfrac{1}{lT}\int_{0}^{\theta(lT)}f_i(\varphi_s(p))ds-\dfrac{1}{lT}\int_{0}^{lT}f_i(\varphi_t(x))dt\right\rvert=\dfrac{1}{lT}\left\lvert\int_{0}^{lT}f_i(\varphi_{\theta(t)}(p))d\theta(t)-\int_{0}^{lT}f_i(\varphi_t(x))dt\right\rvert\\\leq&\dfrac{1}{lT}\left\lvert\int_{0}^{lT}f_i(\varphi_{\theta(t)}(p))(\theta'(t)-1)dt\right\rvert+\dfrac{1}{lT}\left\lvert\int_{0}^{lT}\left[f_i(\varphi_{\theta(t)}(p))-f_i(\varphi_t(x))\right]dt\right\rvert\\\leq&\gamma K+\gamma.
\end{aligned}   
\end{equation*} 
   Consequently, $$\left\lvert\dfrac{1}{\theta(lT)}\int_{0}^{\theta(lT)}f_i(\varphi_t(p))dt-\dfrac{1}{lT}\int_{0}^{lT}f_i(\varphi_t(x))dt\right\rvert<\dfrac{\varepsilon}{4n}\cdot\min_{1\leq i\leq n}\{2^i\left\lVert f_i\right\rVert\}.$$
	
   Denote by $\mu_p$ the invariant measure supported on the periodic orbit ${\rm Orb}(p)$. From the equation $(\vartriangle)$ and $(\vartriangle\vartriangle)$, we have that    
\begin{equation*}
\begin{aligned}
   &d_{\mathcal{M}}(\mu,\mu_p)=\sum_{i=1}^{+\infty}\dfrac{\left\lvert\displaystyle\int_Mf_id\mu-\int_Mf_id\mu_p\right\rvert}{2^i\left\lVert f_i\right\rVert}\\&=\sum_{i=1}^{n}\dfrac{\left\lvert\displaystyle\int_Mf_id\mu-\int_Mf_id\mu_p\right\rvert}{2^i\left\lVert f_i\right\rVert}+\sum_{i=n+1}^{+\infty}\dfrac{\left\lvert\displaystyle\int_Mf_id\mu-\int_Mf_id\mu_p\right\rvert}{2^i\left\lVert f_i\right\rVert}\\&\leq\sum_{i=1}^{n}\dfrac{\left\lvert\displaystyle\int_Mf_id\mu-\dfrac{1}{lT}\int_{0}^{lT}f_i(\varphi_t(y))dt+\dfrac{1}{lT}\int_{0}^{lT}f_i(\varphi_t(y))dt-\int_Mf_id\mu_p\right\rvert}{2^i\left\lVert f_i\right\rVert}+\sum_{i=n+1}^{+\infty}\dfrac{1}{2^{i-1}}\\&\leq\sum_{i=1}^{n}\dfrac{\left\lvert\displaystyle\int_Mf_id\mu-\dfrac{1}{lT}\int_{0}^{lT}f_i(\varphi_t(y))dt\right\rvert+\left\lvert\dfrac{1}{lT}\displaystyle\int_{0}^{lT}f_i(\varphi_t(y))dt-\dfrac{1}{\theta(lT)}\int_{0}^{\theta(lT)}f_i(\varphi_t(p))dt\right\rvert}{2^i\left\lVert f_i\right\rVert}+\dfrac{\varepsilon}{2}\\&<\sum_{i=1}^{n}\dfrac{\dfrac{\varepsilon}{4n}\cdot\min\limits_{1\leq i\leq n}\left\{2^i\left\lVert f_i\right\rVert\right\}+\dfrac{\varepsilon}{4n}\cdot\min\limits_{1\leq i\leq n}\left\{2^i\left\lVert f_i\right\rVert\right\}}{2^i\left\lVert f_i\right\rVert}+\dfrac{\varepsilon}{2}\leq\sum_{i=1}^{n}\dfrac{\varepsilon}{2n}+\dfrac{\varepsilon}{2}<\varepsilon.
\end{aligned}   
\end{equation*}
   This completes the proof of \textbf{Main Theorem}.          
\end{proof}

%\noindent\textbf{Acknowledgements.} We would like to thank Dawei Yang for his useful suggestions. We would also like to thank Gang Liao and Lingmin Liao for their useful conversations.
%\clearpage
%\addcontentsline{toc}{section}{Reference}
%\bibliographystyle{amsplain}
%\bibliography{reference}

\begin{thebibliography}{99}
\bibitem{ABC11} F. Abdenur, C. Bonatti, and S. Crovisier, Nonuniform hyperbolicity for $C^1$-generic diffeomorphisms, {\it Israel J. Math.}, {\bf 183} (2011), 1--60.	
	
\bibitem{AN92} N. Aoki, The set of Axiom A diffeomorphisms with no cycles, {\it Bol. Soc. Brasil. Mat. (N.S.)}, {\bf 23} (1992), no. 1-2, 21--65.

\bibitem{BA21} C. Bonatti and A. da Luz, Star flows and multisingular hyperbolicity, {\it J. Eur. Math. Soc.}, {\bf 23} (2021), no. 8, 2649--2705.

\bibitem{B70} R. Bowen, Markov partitions for Axiom A diffeomorphisms, {\it Amer. J. Math.}, {\bf 92} (1970), 725--747.

\bibitem{BMW12} K. Burns, H. Masur, and A. Wilkinson, The weil-petersson geodesic flow is ergodic, {\it Ann. of Math. (2)}, {\bf 175} (2012), no. 2, 835--908.

\bibitem{CLP89} S. N. Chow, X. B. Lin, and K. J. Palmer, A shadowing lemma with applications to semilinear parabolic equations, {\it SIAM J. Math. Anal.}, {\bf 20} (1989), no. 3, 547--557.

\bibitem{CS10} S. Crovisier, Birth of homoclinic intersections: a model for the central dynamics of partially hyperbolic systems, {\it Ann. of Math. (2)}, {\bf 172} (2010), no. 3, 1641--1677.

\bibitem{CY2017} S. Crovisier and D. W. Yang, Homoclinic tangencies and singular hyperbolicity for three-dimensional vector fields, {\it preprint}, (2017), Arxiv:1702.05994.

\bibitem{SFA20} A. da Luz, Star flows with singularities of different indices, {\it preprint}, (2020), ArXiv:1806.09011v2.

\bibitem{FJ71} J. Franks, Necessary conditions for stability of diffeomorphisms,, {\it Trans. Amer. Math. Soc.}, {\bf 158} (1971), 301--308.

\bibitem{Gan2002} S. B. Gan, A generalized shadowing lemma, {\it Discrete Contin. Dyn. Syst.}, {\bf 8} (2002), no. 3, 627--632.

\bibitem{GW06} S. B. Gan and L. Wen, Nonsingular star flows satisfy Axiom A and the no-cycle condition, {\it Invent. Math.}, {\bf 164} (2006), no. 2, 279--315.

\bibitem{GY} S. B. Gan and D. W. Yang, Morse-Smale systems and horseshoes for three dimensional singular flows, {\it Ann. Sci. \'{E}c. Norm. Sup\'{e}r. (4)}, {\bf 51} (2018), no. 1, 39--112.

\bibitem{Guc76} J. Guckenheimer, A strange, strange attractor, in The Hopf Bifurcation and Its Applications, Applied Mathematical Sciences {\bf 19}, Springer New York, New York, NY, 1976, 368--381.

\bibitem{HW18} B. Han and X. Wen, A shadowing lemma for quasi-hyperbolic strings of flows, {\it J. Differential Equations}, {\bf 264} (2018), no. 1, 1--29.

\bibitem{HS92} S. Hayashi, Diffeomorphisms in $\mathcal{F}^1(M)$ satisfy Axiom A, {\it Ergodic Theory Dynam. Systems}, {\bf 12} (1992), no. 2, 233--253.

\bibitem{HM03} M. Hirayama, Periodic probability measures are dense in the set of invariant measures, {\it Discrete Contin. Dyn. Syst.}, {\bf 9} (2003), no. 5, 1185--1192.  

\bibitem{Ka} A. Katok, Lyapunov exponents, entropy and periodic orbits for diffeomorphisms, {\it Inst. Hautes \'Etudes Sci. Publ. Math.}, {\bf 51} (1980), 137--173.

\bibitem{LGW05} M. Li, S. B. Gan, and L. Wen, Robustly transitive singular sets via approach of an extended linear {P}oincar\'{e} flow, {\it Discrete Contin. Dyn. Syst.}, {\bf 13} (2005), no. 2, 239--269.

\bibitem{LLL24} M. Li, C. Liang, and X. Z. Liu, A closing lemma for non-uniformly hyperbolic singular flows, {\it Comm. Math. Phys.}, {\bf 405} (2024), no. 8, Paper No. 195, 35.

\bibitem{LSWW20} M. Li, Yi Shi, S. R. Wang, and X. D. Wang, Measures of intermediate entropies for star vector fields, {\it Israel J. Math.}, {\bf 240} (2020), no. 2, 791--819.

\bibitem{LY75} T. Y. Li and J. A. Yorke, Period three implies chaos, {\it Amer. Math. Monthly}, {\bf 82} (1975), no. 10, 985--992.

\bibitem{Lian} Z. Lian and L. S. Young, Lyapunov exponents, periodic orbits, and horseshoes for semiflows on Hilbert spaces, {\it J. Amer. Math. Soc.}, {\bf 25} (2012), no. 3, 637--665.

\bibitem{LLS14} C. Liang, G. Liao, and W. X. Sun, A note on approximation properties of the Oseledets splitting, {\it Proc. Amer. Math. Soc.}, {\bf 142} (2014), no. 11, 3825--3838.

\bibitem{LLS09} C. Liang, G. Liu, and W. X. Sun, Approximation properties on invariant measure and Oseledec splitting in non-uniformly hyperbolic systems, {\it Trans. Amer. Math. Soc.}, {\bf 361} (2009), no. 3, 1543--1579.

\bibitem{LST79A} S. T. Liao, A basic property of a certain class of differential systems, {\it Acta Math. Sinica}, {\bf 22} (1979), no. 3, 316--343.

\bibitem{LST79} S. T. Liao, An existence theorem for periodic orbits, {\it Beijing Daxue Xuebao}, (1979), no. 1, 1--20.

\bibitem{LST80} S. T. Liao, On the stability conjecture, {\it Chinese Annals of Mathematics Series A}, {\bf 1} (1980), no. 1, 8--30.

\bibitem{LST81} S. T. Liao, Obstruction sets. {II}, {\it Beijing Daxue Xuebao}, (1981), no. 2, 1--36.

\bibitem{LST85} S. T. Liao, Some uniformity properties of ordinary differential systems and a generalization of an existence theorem for periodic orbits, {\it Beijing Daxue Xuebao}, (1985), no. 2, 1--19.

\bibitem{Liao89} S. T. Liao, On $(\eta,d)$-contractible orbits of vector fields, {\it Systems Sci. Math. Sci.}, {\bf 2} (1989), no. 3, 193--227.

\bibitem{Lor63} E. N. Lorenz, Deterministic nonperiodic flow, {\it J. Atmospheric Sci.}, {\bf 20} (1963), no. 2, 130--141.

\bibitem{LW2025} Y. S. Lu and W. L. Wu, The Lyapunov exponents of hyperbolic measures for $C^1$ Star vector fields on three-dimensional manifolds, {\it preprint}, (2025), ArXiv:2507.23605.

\bibitem{M22}  X. Ma, Existence of periodic orbits and horseshoes for semiflows on a separable Banach space, {\it Calc. Var. Partial Differential Equations}, {\bf 61} (2022), no. 6, Paper No. 217, 37.

\bibitem{MR} R. Ma\~{n}\'{e}, An ergodic closing lemma, {\it Ann. of Math. (2)}, {\bf 116} (1982), no. 3, 503--540.

\bibitem{MM08} R. Metzger and C. Morales, Sectional-hyperbolic systems, {\it Ergodic Theory Dynam. Systems}, {\bf 28} (2008), no. 5, 1587--1597.

\bibitem{MPP04}  C. A. Morales, M. J. Pacifico, and E. R. Pujals, Robust transitive singular sets for $3$-flows are partially hyperbolic attractors or repellers, {\it Ann. of Math. (2)}, {\bf 160} (2004), no. 2, 375--432.

\bibitem{OV} V. Oseledec, A multiplicative ergodic theorem. Characteristic Ljapunov, exponents of dynamical systems, {\it Trudy Moskov. Mat. Ob\v s\v C.}, {\bf 19} (1968), 179--210.

\bibitem{PaS70} J. Palis and S. Smale, {\it Structural stability theorems}, Global Analysis (Proc. Sympos. Pure Math., Vols. XIV, XV, XVI, Berkeley, Calif., 1968), Proc. Sympos. Pure Math., XIV-XVI, Amer. Math. Soc., Providence, RI, 1970, pp. 223--231.

\bibitem{P99} S. Y. Pilyugin, Shadowing in dynamical systems, Lecture Notes in Mathematics, vol. 1706, Springer-Verlag, Berlin, 1999.

\bibitem{PS70} C. Pugh and M. Shub, The $\Omega$-stability theorem for flows, {\it Invent. Math.}, {\bf 11} (1970), 150--158.

\bibitem{PS00} E. R. Pujals and M. Sambarino, Homoclinic tangencies and hyperbolicity for surface diffeomorphisms, {\it Ann. of Math. (2)}, {\bf 151} (2000), no. 3, 961--1023.

\bibitem{SGW14} Y. Shi, S. B. Gan, and L. Wen, On the singular-hyperbolicity of star flows, {\it J. Mod. Dyn.}, {\bf 8} (2014), no. 2, 191--219.

\bibitem{Sig70} K. Sigmund, Generic properties of invariant measures for Axiom A diffeomorphisms, {\it Invent. Math.}, {\bf 11} (1970), 99--109.

\bibitem{Si72} J. G. Sina\u{\i}, Gibbs measures in ergodic theory, {\it Uspehi Mat. Nauk}, {\bf 27} (1972), no. 4(166), 21--64.

\bibitem{SS70} S. Smale, {\it The $\Omega$-stability theorem}, Global Analysis (Proc. Sympos. Pure Math., Vols. XIV, XV, XVI, Berkeley, Calif., 1968), Proc. Sympos. Pure Math., XIV-XVI, Amer. Math. Soc., Providence, RI, 1970, pp. 289–297.

\bibitem{WW10} Z. Q. Wang and W. X. Sun, Lyapunov exponents of hyperbolic measures and hyperbolic periodic orbits, {\it Trans. Amer. Math. Soc.}, {\bf 362} (2010), no. 8, 4267--4282.

\bibitem{WW19} X. Wen and L. Wen, A rescaled expansiveness for flows, {\it Trans. Amer. Math. Soc.}, {\bf 371} (2019), no. 5, 3179--3207.

\bibitem{WYZ} Wu, W. L. and Yang, D. W. and Zhang, Y., On the growth rate of periodic orbits for vector fields, {\it Adv. Math.}, {\bf 346} (2019), 170--193.
	
\end{thebibliography}

\noindent\text{Sun Qimai}\\
School of Mathematics and Statistics\\
Jiangsu Normal University, Xuzhou, 221116, P.R. China\\
sunqimajsnu@163.com\\

\noindent\text{Wang Guangwa}\\
School of Mathematics and Statistics\\
Jiangsu Normal University, Xuzhou, 221116, P.R. China\\
wanggw7653@163.com\\

\noindent\text{Wanlou Wu}\\
School of Mathematics and Statistics\\
Jiangsu Normal University, Xuzhou, 221116, P.R. China\\
wuwanlou@163.com\\

\end{document}